\newtheorem{example}{Example}[section]
\newtheorem{theorem}{Theorem}[section]
\newtheorem{definition}{Definition}[section]
\newtheorem{proposition}{Proposition}[section]
\newtheorem{corollary}{Corollary}[section]
\newtheorem{lemma}{Lemma}[section]
\newenvironment{proof}[1][Proof]{\noindent\textbf{#1.} }{\
\rule{0.5em}{0.5em}}
\begin{document}

\pagestyle{fancy}
\fancyhead{} % clear all header fields
\fancyhead[EC]{\small\it  Patrice P.
Ntumba, \ Adaeze C. Orioha}%
\fancyhead[EL,OR]{\thepage} \fancyhead[OC]{\small\it Biorthogonality
in $\mathcal A$-Pairings
and Hyperbolic Decomposition Theorem}%
\fancyfoot{} % clear all footer fields
\renewcommand\headrulewidth{0.5pt}
\addtolength{\headheight}{2pt} % make space for the rule

\title{\Large{\textbf{Biorthogonality in $\mathcal A$-Pairings and
Hyperbolic Decomposition Theorem for $\mathcal A$-Modules}}}
\author{Patrice P.
Ntumba\footnote{Is the corresponding author for the paper.}, \
Adaeze C. Orioha}

\date{}
\maketitle

\begin{abstract}
In this paper, as part of a project initiated by A. Mallios
consisting of exploring new horizons for \textit{Abstract
Differential Geometry} ($\grave{a}$ la Mallios), \cite{mallios1997,
mallios, malliosvolume2, modern}, such as those related to the
\textit{classical symplectic geometry}, we show that results
pertaining to biorthogonality in pairings of vector spaces do hold
for biorthogonality in pairings of $\mathcal A$-modules. However,
for the \textit{dimension formula} the algebra sheaf $\mathcal A$ is
assumed to be a PID. The dimension formula relates the rank of an
$\mathcal A$-morphism and the dimension of the kernel (sheaf) of the
same $\mathcal A$-morphism with the dimension of the source free
$\mathcal A$-module of the $\mathcal A$-morphism concerned. Also, in
order to obtain an analog of the Witt's hyperbolic decomposition
theorem, $\mathcal A$ is assumed to be a PID while topological
spaces on which $\mathcal A$-modules are defined are assumed
\textit{connected}.
\end{abstract}
%{\it Subject Classification (2000)}: 18F20, 14F05, 54B40.\\
{\it Key Words}: convenient $\mathcal A$-modules, quotient $\mathcal
A$-modules, locally free $\mathcal A$-module of varying finite rank,
orthogonally convenient $\mathcal A$-pairing, locally orthogonally
convenient $\mathcal A$-pairing, hyperbolic decomposition theorem.

\maketitle

\section{Introduction}

In this paper, we discuss \textit{biorthogonality in pairings of
$\mathcal A$-modules} and the \textit{Witt's hyperbolic
decomposition theorem for $\mathcal A$-modules,} where the
\textit{algebra sheaf $\mathcal A$} in the second part of the work
is considered to be a \textit{PID,} that is for every open
$U\subseteq X$, the algebra $\mathcal{A}(U)$ is a PID-algebra; in
other words, given a free $\mathcal A$-module $\mathcal E$ and a
sub-$\mathcal A$-module $\mathcal{F}\subseteq \mathcal E$, one has
that $\mathcal F$ is \textit{section-wise free.} See
\cite{cartandieudonne}.

All the $\mathcal A$-modules in the paper are defined on a fixed
topological space $X$. For the purpose of the Witt's hyperbolic
decomposition theorem, the space $X$ is assumed to be
\textit{connected.} The connectedness of $X$ leads to some useful
simplification, such as: \textit{Locally free $\mathcal A$-modules
of varying finite rank (Definition \ref{definition2}) with $\mathcal
A$ a PID algebra sheaf are locally free $\mathcal A$-modules of
finite rank, i.e. vector sheaves.}

For the sake of easy referencing, we recall some notions, which may
be found in our recent papers: \cite{malliosntumbaqm1},
\cite{darboux}, \cite{malliosntumbaqm2} and \cite{cartandieudonne}.
Let $\mathcal E$ and $\mathcal F$ be $\mathcal A$-modules and $\phi:
\mathcal{E}\oplus \mathcal{F}\longrightarrow \mathcal A$ an
$\mathcal A$-bilinear morphism (or $\mathcal A$-bilinear form). The,
we say that the triple $(\mathcal{F}, \mathcal{E}; \phi)\equiv
(\mathcal{F}, \mathcal{E}; \mathcal{A})\equiv ((\mathcal{F},
\mathcal{E}; \phi); \mathcal{A})$ constitutes a \textit{pairing of
$\mathcal A$-modules} or an \textit{$\mathcal A$-pairing,} in short.
Note the places of $\mathcal F$ in both the direct sum
$\mathcal{E}\oplus \mathcal F$ and the triple $(\mathcal{F},
\mathcal{E}; \phi).$ This placing is in keeping with the classical
case as is in Artin's book \cite{artin}. The sub-$\mathcal A$-module
$\mathcal{F}^{\top^\phi}$ of $\mathcal E$ such that, for every open
subset $U$ of $X$, $\mathcal{F}^{\top^\phi}(U)$ consists of all
$s\in \mathcal{E}(U)$ with $\phi_V(s|_V, \mathcal{F}(V))=0$ for any
open $V\subseteq U$, is called the \textit{left kernel} of
$(\mathcal{F}, \mathcal{E}; \phi)$. In a similar way, one defines
the \textit{right kernel} of $(\mathcal{F}, \mathcal{E}; \phi)$ to
be the sub-$\mathcal A$-module $\mathcal{E}^{\perp_\phi}$ of
$\mathcal F$ such that, for any open subset $U$ of $X$,
$\mathcal{E}^{\perp_\phi}(U)$ consists of all sections
$s\mathcal{F}(U)$ such that $\phi_V(\mathcal{E}(V), s|_V)=0$ for
every open $V\subseteq U.$ If $(\mathcal{F}, \mathcal{E}; \phi)$ is
a \textit{pairing of free $\mathcal A$-modules} (or a \textit{free
$\mathcal A$-pairing} for short), for every open subset $U$ of $X$,
\[
\mathcal{F}^{\top^\phi}(U)= \mathcal{F}(U)^{\top^\phi}:= \{s\in
\mathcal{E}(U):\ \phi_U(s, \mathcal{F}(U))=0\};
\]
similarly,
\[
\mathcal{E}^{\perp_\phi}(U)= \mathcal{E}(U)^{\perp_\phi}:= \{s\in
\mathcal{F}(U):\ \phi_U(\mathcal{E}(U), s)=0\}.
\]

Now, let $(\mathcal{E}, \mathcal{E}; \phi)\equiv (\mathcal{E},
\phi)$ be a self $\mathcal{A}$-pairing such that if $r, s\in
\mathcal{E}(U),$ where $U$ is an open subset of $X$, then $\phi_U(r,
s)=0$ if and only if $\phi_U(s, r)=0.$ The left kernel
$\mathcal{E}^{\top^\phi}$ is the same as the right kernel
$\mathcal{E}^{\perp_\phi}.$ In this case, we say that the $\mathcal
A$-bilinear form $\phi$ is \textit{orthosymmetric} and call
$\mathcal{E}^{\perp_\phi}(= \mathcal{E}^{\top^\phi})$ the
\textit{radical sheaf} of $\mathcal E$, and denote it by
rad$_\phi\mathcal{E}\equiv \mbox{rad}\ \mathcal E$. A self $\mathcal
A$-pairing such that $(\mathcal{E}, \phi)$ such that $\mbox{rad}\
\mathcal{E}\neq 0$ (resp. rad $\mathcal{E}=0$) is called
\textit{isotropic} (resp. \textit{non-isotropic}); $\mathcal E$ is
\textit{totally isotropic} if $\phi$ is identically zero, i.e.,
$\phi_U(r, s)=0$ for all sections $r, s\in \mathcal{E}(U)$, with $U$
any open subset of $X$. For any open $U\subseteq X$, a
\textit{non-zero section} $s\in \mathcal{E}(U)$ is called
\textit{isotropic} if $\phi_U(s, s)=0.$ The \textit{$\mathcal
A$-radical} of a sub-$\mathcal A$-module $\mathcal F$ of $\mathcal
E$ is defined as rad $\mathcal{F}:= \mathcal{F}\cap
\mathcal{F}^{\perp_\phi}= \mathcal{F}\cap \mathcal{F}^{\top^\phi}.$
If $(\mathcal{F}, \mathcal{E}; \phi)$ is a free $\mathcal
A$-pairing, then for every open subset $U\subseteq X$,
\[\begin{array}{lll}
(\mbox{rad}\ \mathcal{E})(U)= \mbox{rad}\ \mathcal{E}(U) &
\mbox{and} & (\mbox{rad}\ \mathcal{F})(U)= \mbox{rad}\
\mathcal{F}(U),\end{array}
\]
where $\mbox{rad}\ \mathcal{E}(U)= \mathcal{E}(U)\cap
\mathcal{E}(U)^{\perp_\phi}$ and $\mbox{rad}\ \mathcal{F}(U)=
\mathcal{F}(U)\cap \mathcal{F}(U)^{\top^\phi}.$ Given an $\mathcal
A$-pairing $(\mathcal{E}, \mathcal{E}; \phi)$ with $\phi$ a
symmetric or antisymmetric $\mathcal A$-bilinear morphism,
sub-$\mathcal A$-modules $\mathcal{E}_1$ and $\mathcal{E}_2$ are
said to be \textit{mutually orthogonal} if for every open subset $U$
of $X$, $\phi_U(r, s)=0,$ for all $r\in \mathcal{E}_1(U)$ and $s\in
\mathcal{E}_2(U)$. If $\mathcal{E}= \oplus_{i\in I}\mathcal{E}_i,$
where the $\mathcal{E}_i$ are pairwise orthogonal sub-$\mathcal
A$-modules of $\mathcal E$, we say that $\mathcal E$ is the direct
orthogonal sum of the $\mathcal{E}_i$, and write $\mathcal{E}:=
\mathcal{E}_1\bot\cdots \bot \mathcal{E}_i\bot \cdots .$

This paper grew out of our earlier efforts to understand the
conditions defining \textit{convenient $\mathcal A$-modules} (cf.
\cite{malliosntumba2} and \cite{cartandieudonne}). For the sake of
self-containedness, we recall a convenient $\mathcal A$-module is a
self $\mathcal A$-pairing $\mathcal{E}, \phi)$, where $\mathcal E$
is a free $\mathcal A$-module of finite rank and $\phi$ an
orthosymmetric $\mathcal A$-bilinear morphism, such that the
following conditions are satisfied: \textit{$(1)$ If $\mathcal F$ is
a free sub-$\mathcal A$-module of $\mathcal E$, then
$\mathcal{F}^\perp\equiv \mathcal{F}^{\perp_\phi}=
\mathcal{F}^{\top^\phi}\equiv \mathcal{F}^\top$ is a free
sub-$\mathcal A$-module of $\mathcal E$; $(2)$ Every free
sub-$\mathcal A$-module $\mathcal F$ of $\mathcal E$ is orthogonally
reflexive, i.e., $\mathcal{F}^{\perp\top}= \mathcal F$; $(3)$ The
intersection of any two free sub-$\mathcal A$-modules of $\mathcal
E$ is a free sub-$\mathcal A$-module.} While symplectic vector
spaces satisfy the above conditions, we have not come up yet with an
example other than vector spaces. However, we show in the present
account that if the space $X$ is connected and the algebra sheaf
$\mathcal A$ a PID, then every \textit{symplectic orthogonally
convenient $\mathcal A$-pairing} satisfies the conditions above.
Every free $\mathcal A$-module $\mathcal E$ has a \textit{naturally
associated orthogonally convenient $\mathcal A$-pairing}: the
$\mathcal A$-pairing $(\mathcal{E}^\ast, \mathcal{E}; \nu)$, where
\[
\nu_U(s, \psi):= \psi_U(s)\in \mathcal{A}(U)
\]
for every open subset $U\subseteq X$ and sections $s\in
\mathcal{E}(U)$, $\psi\equiv (\psi_V)_{U\supseteq V,\ open}\in
\mathcal{E}^\ast(U).$ The $\mathcal A$-pairing $(\mathcal{E}^\ast,
\mathcal{E}; \nu)$ is called the \textit{canonical
$\mathcal{A}$-pairing of $\mathcal E$ and $\mathcal{E}^\ast$.}

Condition $(2)$ of convenient $\mathcal A$-modules has prompted us
to study extensively biorthogonality in pairings of $\mathcal
A$-modules. One may refer to \cite{artin}, \cite{chambadal},
\cite{deheuvels}, \cite{crumeyrolle}, \cite{lang} and \cite{omeara}
for biorthogonality in pairings of vector spaces.

\textit{Notation.} We assume throughout the paper, unless otherwise
mentioned, that the pair $(X, \mathcal{A})$ is an
\textit{algebraized space} (\cite[p. 96]{mallios}), where $\mathcal
A$ is a unital $\mathbb C$-algebra sheaf such that \textit{every
nowhere-zero section of $\mathcal A$ is invertible.} Furthermore,
all free $\mathcal A$-modules are considered to be
\textit{torsion-free,} that is, for eny open subset $U\subseteq X$
and nowhere-zero section $s\in \mathcal{E}(U)$, if $as=0$, where
$a\in \mathcal{A}(U),$ then $a=0.$ Next, in the course of the paper,
the notation $s\in \mathcal{E}(U)$ signifies that $s$ is a section
of an $\mathcal A$-module $\mathcal E$ over an open subset
$U\subseteq X$. Finally, left and right kernels in a canonical
$\mathcal A$-pairing $\mathcal{E}^\ast, \mathcal{E}; \nu)$ are
simply denoted using superscripts $\perp$ and $\top$ instead of the
more formal ones $\perp_\nu$ and $\top^\nu$.

\section{Universal property of quotient $\mathcal A$-modules}

This section contains proofs of the basic results on biorthogonality
in canonical parings of $\mathcal A$-modules, namely Proposition
\ref{proposition2} and Theorems \ref{theorem4} and \ref{theorem6}.

\begin{theorem}
Let $\mathcal E$, $\mathcal F$ and $\mathcal G$ be $\mathcal
A$-modules.

$1.$ Let $\phi\in \emph{H}om_\mathcal{A}(\mathcal{E}, \mathcal{F})$
be a surjective $\mathcal A$-morphism. Then, if $\psi\in
\emph{H}om_\mathcal{A}(\mathcal{E}, \mathcal{G})$ such that $\ker
\phi\subseteq \ker \psi,$ there exists a unique $\theta\in
\emph{H}om_\mathcal{A}(\mathcal{F}, \mathcal{G})$ such that the
diagram
\[
\xymatrix{\mathcal{E}\ar[r]^\phi\ar[dr]_\psi &
\mathcal{F}\ar@{.>}[d]^\theta\\ & \mathcal{G}}
\]
commutes. In other words, the mapping $\theta\mapsto \theta\circ
\phi$ is an $\mathcal A$-isomorphism from
$\emph{H}om_\mathcal{A}(\mathcal{F}, \mathcal{G})$ onto the
sub-$\mathcal A$-module of $\emph{H}om_\mathcal{A}(\mathcal{E},
\mathcal{G})$ consisting of $\mathcal A$-morphisms whose kernel
contains $\ker \phi.$

$2.$ Let $\phi\in \emph{H}om_\mathcal{A}(\mathcal{F}, \mathcal{G})$
be an injective $\mathcal A$-morphism. Then, if $\psi\in
\emph{H}om_\mathcal{A}(\mathcal{E}, \mathcal{G})$ such that
$\emph{\mbox{Im}} \psi\subseteq \emph{\mbox{Im}} \phi$, there exists
a unique $\theta\in \emph{H}om_\mathcal{A}(\mathcal{E},
\mathcal{F})$ making the diagram
\[
\xymatrix{\mathcal{E}\ar@{.>}[d]_\theta\ar[rd]^\psi & \\
\mathcal{F}\ar[r]_\phi & \mathcal{G}}
\]
commute. More precisely, the mapping $\theta\mapsto \phi\circ
\theta$ is an $\mathcal A$-isomorphism from
$\emph{H}om_\mathcal{A}(\mathcal{E}, \mathcal{F})$ onto the
sub-$\mathcal A$-module of $\emph{H}om_\mathcal{A}(\mathcal{E},
\mathcal{G})$ consisting of $\mathcal A$-morphism whose image is
contained in $\emph{\mbox{Im}} \phi$. \label{theorem1}
\end{theorem}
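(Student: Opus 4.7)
The plan is to reduce both parts to the universal property of the quotient (respectively, image) sub-$\mathcal{A}$-module, exactly as in the classical module-theoretic setting. The only sheaf-theoretic input I need is that a surjective (respectively, injective) $\mathcal{A}$-morphism of $\mathcal{A}$-modules induces an $\mathcal{A}$-isomorphism of $\mathcal{E}/\ker\phi$ onto $\mathcal{F}$ (respectively, of $\mathcal{F}$ onto $\mathrm{Im}\,\phi$); both facts are stalk-local and follow from the corresponding statements for modules.

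For Part $1$, I would factor $\phi$ canonically as $\mathcal{E}\twoheadrightarrow \mathcal{E}/\ker\phi \xrightarrow{\bar\phi}\mathcal{F}$, where $\bar\phi$ is an $\mathcal{A}$-isomorphism by the surjectivity of $\phi$. The hypothesis $\ker\phi\subseteq \ker\psi$ lets $\psi$ factor through the same quotient as some $\bar\psi:\mathcal{E}/\ker\phi\to\mathcal{G}$. Setting $\theta:=\bar\psi\circ\bar\phi^{-1}$ gives an $\mathcal{A}$-morphism with $\theta\circ\phi=\psi$. Uniqueness is immediate: since $\phi$ is an epimorphism of $\mathcal{A}$-modules, $(\theta-\theta')\circ\phi=0$ forces $\theta=\theta'$.

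For Part $2$, injectivity of $\phi$ yields an $\mathcal{A}$-isomorphism $\bar\phi:\mathcal{F}\to\mathrm{Im}\,\phi$, and $\mathrm{Im}\,\psi\subseteq\mathrm{Im}\,\phi$ lets $\psi$ be viewed as an $\mathcal{A}$-morphism $\bar\psi:\mathcal{E}\to\mathrm{Im}\,\phi$. Setting $\theta:=\bar\phi^{-1}\circ\bar\psi$ produces the unique $\mathcal{A}$-morphism with $\phi\circ\theta=\psi$; here uniqueness uses that $\phi$ is a monomorphism, so $\phi\circ(\theta-\theta')=0$ implies $\theta=\theta'$.

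Verifying the $\mathcal{A}$-isomorphism claims is routine once existence and uniqueness are in hand. In Part $1$, the assignment $\theta\mapsto\theta\circ\phi$ is $\mathcal{A}$-linear, has trivial kernel by the epimorphism property of $\phi$, lands in the asserted sub-$\mathcal{A}$-module because $(\theta\circ\phi)(\ker\phi)=0$, and hits it surjectively by the factorization constructed above; Part $2$ is dual. The only genuine obstacle is ensuring that the classical factorization through $\mathcal{E}/\ker\phi$ is valid at the level of sheaves of $\mathcal{A}$-modules rather than merely at the level of sections over an arbitrary open set; this is handled by passing to stalks (or equivalently working with the sheafified quotient), where the ordinary module arguments apply verbatim. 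The torsion-freeness and invertibility conventions adopted in the paper are not needed for this particular statement.
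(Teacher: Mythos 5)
Your proof is correct, but it takes a genuinely different route from the paper's. You reduce both parts to the first isomorphism theorem for $\mathcal{A}$-modules and the universal property of the quotient (resp.\ image) sheaf, factoring $\phi$ as $\mathcal{E}\twoheadrightarrow\mathcal{E}/\ker\phi\xrightarrow{\ \sim\ }\mathcal{F}$ (resp.\ $\mathcal{F}\xrightarrow{\ \sim\ }\mathrm{Im}\,\phi\hookrightarrow\mathcal{G}$) and composing. The paper instead proceeds by a bare-hands, section-level construction: for each open $U$ it defines $\theta_U$ on $\mathcal{F}(U)$ by choosing a $\phi_U$-preimage, checks well-definedness via $\ker\phi_U\subseteq\ker\psi_U$, and then verifies that the family $(\theta_U)$ is compatible with the restriction maps of the complete presheaves of sections, so that it assembles into an $\mathcal{A}$-morphism. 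Your approach is shorter and more categorical; the paper's is more explicit and self-contained. Two remarks of caution on your version. First, the paper deduces the universal property of quotient $\mathcal{A}$-modules as Corollary \ref{corollary2} \emph{from} Theorem \ref{theorem1}, so if you invoke that universal property you should note it is being established independently (it holds because sheaves of $\mathcal{A}$-modules form an abelian category), lest the order of dependence look circular. Second, your remark about stalks and sheafification deserves a bit more precision: in general $U\mapsto\mathcal{E}(U)/\ker\phi(U)$ and $U\mapsto\mathrm{Im}(\phi_U)$ are only presheaves and one must sheafify. What saves you is precisely the hypotheses the paper uses section-wise, namely that $\phi_U$ is surjective (Part 1) or injective (Part 2) for every $U$; under those hypotheses the naive presheaf quotient is already isomorphic to $\mathcal{F}$, and $\mathrm{Im}(\phi_U)=(\mathrm{Im}\,\phi)(U)$, so the classical module-theoretic factorization does lift verbatim. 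Making that observation explicit would close the only real gap in the argument.
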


\begin{proof}
\textit{Assertion $1.$} \textbf{Uniqueness.} Let $\theta_1$,
$\theta_2\in \emph{H}om_\mathcal{A}(\mathcal{F}, \mathcal{G})$ be
such that $\psi= \theta_1\circ \phi$ and $\psi= \theta_2\circ \phi.$
Fix an open subset $U$ in $X$; since $\phi_U$ is surjective, the
equation $\theta_{1, U}\circ \phi_U= \theta_{2, U}\circ \phi_U$
implies that $\theta_{1, U}= \theta_{2, U}.$ Thus, $\theta_1=
\theta_2.$

\textbf{Existence.} Fix an open subset $U$ in $X$ and consider an
element (section) $t\in \mathcal{F}(U)$. Since $\phi_U$ is
surjective, there exists an element $s\in \mathcal{E}(U)$ such
that $t=\phi_U(s).$ Now, suppose there exists a $r\in
\mathcal{F}(U)$ with $u\in \ker \psi_U$ and $v\notin \ker \psi_U$
as its pre-images by $\phi_U$, i.e.
\[
\phi_U(v)= r= \phi_U(u)
\]
with $u\in \ker \psi_U$ and $v\notin \ker \psi_U.$ Since $\phi_U$
is linear, $\phi_U(v-u)=0$; so $v-u\in \ker \phi_U\subseteq \ker
\psi_U.$ But $u\in \ker \psi_U,$ so $v\in \ker \psi_U,$ which
yields a \textit{contradiction}. We conclude that such a situation
cannot occur. Furthermore, the element $\psi_U(s)$ does only
depend on $t$. Let $\theta_U$ be the $\mathcal{A}(U)$-morphism
sending $\mathcal{F}(U)$ into $\mathcal{G}(U)$ and such that
\[
\theta_U(t)= \psi_U(s);
\]
that
\[
\psi_U= \theta_U\circ \phi_U
\]
is clear.

Next, let us consider the \textit{complete presheaves of sections}
of $\mathcal E$, $\mathcal F$ and $\mathcal G$, respectively, viz.
\[\begin{array}{lll}\Gamma(\mathcal{E})\equiv (\Gamma(U,
\mathcal{E}), \alpha^U_V), & \Gamma(\mathcal{F})\equiv (\Gamma(U,
\mathcal{F}), \beta^U_V), & \Gamma(\mathcal{G})\equiv (\Gamma(U,
\mathcal{G}), \delta^U_V).\end{array}
\]
Given open subsets $U$ and $V$ of $X$ such that $V\subseteq U$,
since $\psi\in \emph{H}om_\mathcal{A}(\mathcal{E}, \mathcal{G})$,
one has
\begin{equation}\label{eq1}
\psi_V\circ \alpha^U_V= \delta^U_V\circ \psi_U.
\end{equation}
But $\psi_U= \theta_U\circ \phi_U$ and $\psi_V= \theta_V\circ
\phi_V,$ therefore, (\ref{eq1}) becomes
\[
\theta_V\circ \phi_V\circ \alpha^U_V= \delta^U_V\circ
\theta_U\circ \phi_U
\]
or
\begin{equation}\label{eq2}
\theta_V\circ \beta^U_V\circ \phi_U= \delta^U_V\circ \theta_U\circ
\phi_U.
\end{equation}
Since $\phi_U$ is surjective, it follows from (\ref{eq2}) that
\[
\theta_V\circ \beta^U_V= \delta^U_V\circ \theta_U,
\]
which means that $\theta\equiv (\theta_U)_{X\supseteq U,\ open}$
is an $\mathcal A$-morphism of $\mathcal F$ into $\mathcal G$ such
that
\[
\psi= \theta\circ \phi,
\]
as required.

\textit{Assertion $2.$} \textbf{Uniqueness.} Let $\theta_1,\
\theta_2\in \emph{H}om_\mathcal{A}(\mathcal{E}, \mathcal{F})$ be
such that $\psi= \phi\circ \theta_1$ and $\psi= \phi\circ \theta_2$.
As $\phi$ is injective, the relation
\[
\phi\circ \theta_1= \phi\circ \theta_2
\]
implies that $\theta_1= \theta_2,$ so uniqueness is obtained.

\textbf{Existence.} Fix an open subset $U$ in $X$ and consider an
element $s\in \mathcal{E}(U);$ since $\mbox{Im} \psi\subseteq
\mbox{Im} \phi,$ there exists a $t\in \mathcal{F}(U)$ such that
\begin{equation}\label{eq3}
\phi_U(t)= \psi_U(s).
\end{equation}
But $\phi_U$ is injective, therefore such an element $t$ is
unique. Now, let $\theta_U$ be the mapping of $\mathcal{E}(U)$
into $\mathcal{F}(U)$ sending an element $s\in \mathcal{E}(U)$ to
an element $t\in \mathcal{F}(U)$ such that (\ref{eq3}) is
satisfied. It is immediate that $\theta_U$ is
$\mathcal{A}(U)$-linear, and one has
\[
\psi_U= \phi_U\circ \theta_U.
\]

Finally, let $\Gamma(\mathcal{E})\equiv (\Gamma(U, \mathcal{E}),
\alpha^U_V),\  \Gamma(\mathcal{F})\equiv (\Gamma(U, \mathcal{F}),
\beta^U_V), \ \Gamma(\mathcal{G})\equiv (\Gamma(U, \mathcal{G}),
\delta^U_V)$ be as above the complete presheaves of sections of
$\mathcal E$, $\mathcal F$ and $\mathcal G$, respectively. Given
open subsets $U$ and $V$ of $X$ such that $V\subseteq U,$ since
$\psi\in \emph{H}om_\mathcal{A}(\mathcal{E}, \mathcal{G}),$ one has
\begin{equation}\label{eq4}
\psi_V\circ \alpha^U_V= \delta^U_V\circ \psi_U.
\end{equation}
But $\psi_U= \phi_U\circ \theta_U$ and $\psi_V= \phi_V\circ
\theta_V$, therefore, we deduce from (\ref{eq4}) that
\[
\phi_V\circ \theta_V\circ \alpha^U_V= \delta^U_V\circ \phi_U\circ
\theta_U
\]
or
\begin{equation}\label{eq5}
\phi_V\circ \theta_V\circ \alpha^U_V= \phi_V\circ \beta^U_V\circ
\theta_U.
\end{equation}
Since $\phi_V$ is injective, it is clear from (\ref{eq5}) that
\[
\theta_V\circ \alpha^U_V= \beta^U_V\circ \theta_U,
\]
which is to say that $\theta\equiv (\theta_U)_{X\supseteq U,\
open}$ is an $\mathcal A$-morphism of $\mathcal E$ into $\mathcal
F$ such that
\[
\psi= \phi\circ \theta,
\]
and the proof is complete.
\end{proof}

The \textit{universal property of quotient $\mathcal A$-modules} is
then obtained as a corollary of Theorem \ref{theorem1}. More
precisely, one has

\begin{corollary}$($\textbf{Universal property of quotient $\mathcal
A$-modules}$)$ Let $\mathcal E$ be an $\mathcal A$-module,
$\mathcal{E}'$ a sub-$\mathcal A$-module of $\mathcal E$, and $\phi$
the canonical $\mathcal A$-morphism of $\mathcal E$ onto
$\mathcal{E}/\mathcal{E}'$. The pair $(\mathcal{E}/\mathcal{E}',
\phi)$ satisfies the following universal property:

Given any pair $(\mathcal{F}, \psi)$ consisting of an $\mathcal
A$-module $\mathcal F$ and an $\mathcal A$-morphism $\psi\in
\emph{H}om_\mathcal{A}(\mathcal{E}, \mathcal{F})$ such that
$\mathcal{E}'\subseteq \ker \psi,$ there exists a unique $\mathcal
A$-morphism $\widetilde{\psi}\in
\emph{H}om_\mathcal{A}(\mathcal{E}/\mathcal{E}', \mathcal{F})$ such
that the diagram
\[
\xymatrix{\mathcal{E}\ar[r]^\phi\ar[dr]_\psi &
\mathcal{E}/\mathcal{E}'\ar@{.>}[d]^{\widetilde{\psi}}\\ &
\mathcal{F}}
\]
commutes, i.e.
\[
\psi= \widetilde{\psi}\circ \phi.
\]

The kernel of $\widetilde{\psi}$ equals the image by $\phi$ of the
kernel of $\psi$, and the image of $\widetilde{\psi}$ equals the
image of $\psi.$

The mapping
\[
\theta\mapsto \theta\circ \phi
\]
is an $\mathcal{A}$-isomorphism of the $\mathcal{A}$-module
$\emph{H}om_\mathcal{A}(\mathcal{E}/\mathcal{E}', \mathcal{F})$ onto
the sub-$\mathcal A$-module of $\emph{H}om_\mathcal{A}(\mathcal{E},
\mathcal{F})$ consisting of $\mathcal A$-morphisms of $\mathcal{E}$
into $\mathcal F$ whose kernel contains
$\mathcal{E}'$.\label{corollary2}
\end{corollary}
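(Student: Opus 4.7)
The plan is to deduce this corollary as a direct specialization of Theorem \ref{theorem1}, Assertion 1. First I would record that the canonical $\mathcal{A}$-morphism $\phi: \mathcal{E} \to \mathcal{E}/\mathcal{E}'$ is surjective with $\ker \phi = \mathcal{E}'$, both being standard facts about quotient $\mathcal{A}$-modules that can be read off section-wise (up to sheafification). Under this identification, the hypothesis $\mathcal{E}' \subseteq \ker \psi$ becomes $\ker \phi \subseteq \ker \psi$, placing us precisely in the setting of Theorem \ref{theorem1}(1).

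Applying that theorem immediately supplies the existence and uniqueness of $\widetilde{\psi} \in \emph{H}om_\mathcal{A}(\mathcal{E}/\mathcal{E}', \mathcal{F})$ with $\psi = \widetilde{\psi} \circ \phi$, and simultaneously establishes the last claim of the corollary: the assignment $\theta \mapsto \theta \circ \phi$ realizes an $\mathcal{A}$-isomorphism from $\emph{H}om_\mathcal{A}(\mathcal{E}/\mathcal{E}', \mathcal{F})$ onto the sub-$\mathcal{A}$-module of $\emph{H}om_\mathcal{A}(\mathcal{E}, \mathcal{F})$ consisting of morphisms whose kernel contains $\mathcal{E}' = \ker \phi$. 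So the core universal-property statement requires no new argument beyond what has just been proved.

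What remains is to pin down $\ker \widetilde{\psi} = \phi(\ker \psi)$ and $\mbox{Im}\, \widetilde{\psi} = \mbox{Im}\, \psi$. The image equality is immediate from $\widetilde{\psi} \circ \phi = \psi$ together with the surjectivity of $\phi$. For the kernel, one inclusion $\phi(\ker \psi) \subseteq \ker \widetilde{\psi}$ reads off from the same relation, while the reverse inclusion uses surjectivity of $\phi$ to lift any local section in $\ker \widetilde{\psi}$ to a section killed by $\psi$.

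The only mild obstacle is sheaf-theoretic bookkeeping: quotients of $\mathcal{A}$-modules are formed by sheafification, so the identifications of $\ker \widetilde{\psi}$ and $\mbox{Im}\, \widetilde{\psi}$ should be verified as sheaf-level equalities rather than merely as equalities of sections over a fixed open set. Following the pattern set in the proof of Theorem \ref{theorem1}, I would pass to the complete presheaves of sections (or equivalently to stalks), where surjectivity of $\phi$ persists and the two-inclusion arguments above go through verbatim.
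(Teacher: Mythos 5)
Your proposal follows the paper's proof exactly: the paper's entire argument is ``Apply assertion $1$ of Theorem \ref{theorem1},'' which is precisely the specialization you carry out (observing that the canonical projection $\phi$ is surjective with $\ker\phi=\mathcal{E}'$). Your additional verification of the kernel and image claims, and your remark about checking the identifications at the sheaf level, fills in details the paper leaves implicit but does not constitute a different approach.
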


\begin{proof}
Apply assertion $1$ of Theorem \ref{theorem1}.
\end{proof}

Similarly to the classical case (cf. \cite[p. 15, Corollary
1]{chambadal}), we also have the following corollary, the proof of
which is an easy exercise and is, for that reason, omitted.

\begin{corollary}\label{corollary1}
Let $\mathcal E$ and $\mathcal F$ be $\mathcal A$-modules and
$\phi\in \emph{H}om_\mathcal{A}(\mathcal{E}, \mathcal{F}).$ Then,
\begin{enumerate}
\item [{$(1)$}] $\mathcal{E}/\ker \phi= \emph{\mbox{Im}} \phi$
within an $\mathcal A$-isomorphism. \item [{$(2)$}] Given a
sub-$\mathcal A$-module $\mathcal{F}'$ of $\mathcal F$,
$\mathcal{E}'\equiv \phi^{-1}(\mathcal{F}')$ is a sub-$\mathcal
A$-module of $\mathcal E$ containing $\ker \phi;$ moreover,
$\mathcal{F}'= \phi(\mathcal{E}')$ if $\phi$ is surjective.
\item[{$(3)$}] Conversely, if $\mathcal{E}'$ is a sub-$\mathcal
A$-module of $\mathcal E$ containing $\ker \phi$, then
$\mathcal{F}'\equiv \emph{\mbox{Im}} \mathcal{E}'$ is a
sub-$\mathcal A$-module of $\mathcal F$ such that $\mathcal{E}'=
\phi^{-1}(\mathcal{F}').$
\end{enumerate}
\end{corollary}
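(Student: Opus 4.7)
The plan is to reduce all three parts to direct consequences of the universal property (Corollary \ref{corollary2}) together with the sheaf-level structure of kernels, images, and preimages, treating everything via the complete presheaves of sections so that the $\mathcal{A}$-module structure is inherited automatically.

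For assertion $(1)$, I would apply Corollary \ref{corollary2} with $\mathcal{E}'= \ker \phi$ and $(\mathcal{F},\psi)= (\mathcal{F},\phi)$. Since $\ker \phi\subseteq \ker \phi$ trivially, there is a unique $\widetilde{\phi}\in \mathrm{Hom}_{\mathcal{A}}(\mathcal{E}/\ker\phi, \mathcal{F})$ with $\phi= \widetilde{\phi}\circ \pi$, where $\pi$ is the canonical projection. The statement of Corollary \ref{corollary2} tells us that $\ker \widetilde{\phi}= \pi(\ker \phi)= 0$ and $\mathrm{Im}\,\widetilde{\phi}= \mathrm{Im}\,\phi$; hence $\widetilde{\phi}$ factors as an $\mathcal{A}$-isomorphism $\mathcal{E}/\ker\phi\xrightarrow{\sim} \mathrm{Im}\,\phi$, which is exactly the first claim.

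For assertion $(2)$, I would define the presheaf $U\mapsto \phi_U^{-1}(\mathcal{F}'(U))$ and show it is a complete presheaf, so it sheafifies to a sub-$\mathcal{A}$-module $\mathcal{E}'$ of $\mathcal{E}$: this amounts to verifying that the restriction maps $\alpha^U_V$ of $\Gamma(\mathcal{E})$ send $\phi_U^{-1}(\mathcal{F}'(U))$ into $\phi_V^{-1}(\mathcal{F}'(V))$, which follows from the commutativity of the naturality squares for $\phi$ and the fact that $\mathcal{F}'$ itself is a sub-$\mathcal{A}$-module. The inclusion $\ker \phi\subseteq \mathcal{E}'$ is immediate since $0\in \mathcal{F}'(U)$ on every open $U$. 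For the final clause, when $\phi$ is surjective, $\phi_U$ is surjective at each $U$; consequently, for any $t\in \mathcal{F}'(U)$, one can pick a preimage $s\in \mathcal{E}(U)$, which lies in $\mathcal{E}'(U)$, giving $\mathcal{F}'= \phi(\mathcal{E}')$.

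For assertion $(3)$, I would define $\mathcal{F}'$ via the presheaf $U\mapsto \phi_U(\mathcal{E}'(U))$ (followed by sheafification if necessary), which is clearly a sub-$\mathcal{A}$-module of $\mathcal{F}$; the inclusion $\mathcal{E}'\subseteq \phi^{-1}(\mathcal{F}')$ is tautological. The reverse inclusion is the only delicate point, and this is where the hypothesis $\ker \phi\subseteq \mathcal{E}'$ is essential: given $s\in \phi^{-1}(\mathcal{F}')(U)$, we have $\phi_U(s)\in \mathcal{F}'(U)$, so $\phi_U(s)= \phi_U(s')$ for some $s'\in \mathcal{E}'(U)$, whence $s- s'\in \ker \phi_U\subseteq \mathcal{E}'(U)$, and therefore $s\in \mathcal{E}'(U)$. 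I expect this last step, together with the verification that all the section-wise constructions are genuinely compatible with restriction so as to yield honest sub-$\mathcal{A}$-modules (rather than just presheaves of sub-modules), to be the only real obstacle; everything else is a formal transcription of the classical argument in \cite[p. 15]{chambadal}.
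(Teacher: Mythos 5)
The paper gives no proof of this corollary; it declares it ``an easy exercise'' in the style of \cite[p.\,15, Corollary 1]{chambadal} and moves on, so there is no written argument here to compare against. Your proposal is a correct and natural way to fill that gap, using exactly the machinery just established: assertion $(1)$ is an immediate application of Corollary~\ref{corollary2} with $\mathcal{E}'=\ker\phi$ and $\psi=\phi$, as you describe, and assertions $(2)$ and $(3)$ reduce to section-wise verifications together with the checks that the section-wise constructions are stable under restriction and complete.

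The one small imprecision is in the reverse inclusion of $(3)$. Having defined $\mathcal{F}'$ as the sheafification of the image presheaf $U\mapsto\phi_U(\mathcal{E}'(U))$, the claim ``$\phi_U(s)\in\mathcal{F}'(U)$, so $\phi_U(s)=\phi_U(s')$ for some $s'\in\mathcal{E}'(U)$'' identifies $\mathcal{F}'(U)$ with $\phi_U(\mathcal{E}'(U))$, which is not automatic when sheafification was actually needed. The fix is routine and does not disturb the conclusion: $\phi_U(s)\in\mathcal{F}'(U)$ yields an open cover $\{V_i\}$ of $U$ and sections $s_i'\in\mathcal{E}'(V_i)$ with $\phi_{V_i}(s_i')=\phi_U(s)|_{V_i}$; then $s|_{V_i}-s_i'\in\ker\phi_{V_i}=(\ker\phi)(V_i)\subseteq\mathcal{E}'(V_i)$, hence $s|_{V_i}\in\mathcal{E}'(V_i)$ for every $i$, and since $\mathcal{E}'$ is a (complete presheaf of sections of a) sheaf, $s\in\mathcal{E}'(U)$. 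This is precisely the local-to-global passage you flagged as the only real obstacle, so no essential idea is missing.
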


As a further application of the universal property of quotient
$\mathcal A$-modules, we have

\begin{corollary}\label{corollary3}
Let $\mathcal E$ be a free $\mathcal A$-module, and $\mathcal{E}_1$
a free sub-$\mathcal A$-module of $\mathcal E$. Then, the $\mathcal
A$-morphism $\phi\equiv (\phi_U)_{X\supseteq U,\ open}\in
\emph{H}om_{\mathcal{A}}(\mathcal{E}^\ast, \mathcal{E}^\ast_1)$ such
that every $\phi_U$ maps an element $(\psi_V)_{U\supseteq V,\ open}$
of $\Gamma(\mathcal{E}^\ast)(U)\equiv \mathcal{E}^\ast(U)$ onto its
restriction $({\psi_V}|_{\mathcal{E}_1(V)})_{U\supseteq V,\ open}\in
\mathcal{E}_1^\ast(U)$ is surjective, and has
$\mathcal{E}_1^\perp\subseteq \mathcal{E}^\ast$ as its kernel.
Moreover,
\[
\mathcal{E}^\ast/\mathcal{E}_1^\perp= \mathcal{E}_1^\ast
\]
within an $\mathcal A$-isomorphism.
\end{corollary}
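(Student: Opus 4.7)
The plan is to identify the stated isomorphism as an instance of the First Isomorphism Theorem for $\mathcal{A}$-modules, namely Corollary \ref{corollary1}$(1)$ applied to the restriction morphism $\phi\colon \mathcal{E}^\ast\to \mathcal{E}_1^\ast$. This shifts the work to two verifications: computing $\ker\phi$ and showing that $\phi$ is surjective. Both live inside the section category, so I would carry the argument out sectionwise on a general open $U\subseteq X$ and then check that the resulting family is compatible under restrictions.

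The kernel computation is essentially unwinding definitions. A section $\psi=(\psi_V)_{U\supseteq V,\ open}\in \mathcal{E}^\ast(U)$ lies in $\ker\phi_U$ exactly when each restriction $\psi_V|_{\mathcal{E}_1(V)}$ is identically zero for every open $V\subseteq U$; since $\nu_V(s,\psi_V)=\psi_V(s)$, this is precisely the defining condition for membership in $\mathcal{E}_1^\perp(U)$. Hence $\ker\phi=\mathcal{E}_1^\perp$ as sub-$\mathcal{A}$-modules of $\mathcal{E}^\ast$.

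For surjectivity, the strategy is to extend any given $\psi\in \mathcal{E}_1^\ast(U)$ to a section of $\mathcal{E}^\ast(U)$. Since both $\mathcal{E}$ and $\mathcal{E}_1$ are free, one picks a basis $(f_j)_{j\in J}$ of $\mathcal{E}_1$ and completes it to a basis $(e_i)_{i\in I}$ of $\mathcal{E}$ with $J\subseteq I$, thereby exhibiting $\mathcal{E}_1$ as a direct $\mathcal{A}$-summand of $\mathcal{E}$. One then defines $\tilde\psi_V$ to coincide with $\psi_V$ on the $f_j$'s and to vanish on the remaining basis vectors, extended by $\mathcal{A}(V)$-linearity; compatibility of the family $(\tilde\psi_V)_V$ is inherited directly from that of $(\psi_V)_V$, producing a genuine section $\tilde\psi\in \mathcal{E}^\ast(U)$ with $\phi_U(\tilde\psi)=\psi$. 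Corollary \ref{corollary1}$(1)$ then delivers the $\mathcal{A}$-isomorphism $\mathcal{E}^\ast/\mathcal{E}_1^\perp\cong \mbox{Im}\,\phi=\mathcal{E}_1^\ast$, as required.

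The main obstacle is the basis-completion step: one has to know that a free sub-$\mathcal{A}$-module $\mathcal{E}_1$ of a free $\mathcal{A}$-module $\mathcal{E}$ admits a direct $\mathcal{A}$-complement, so that linear forms on $\mathcal{E}_1$ may be extended by zero on the complementary summand. This is where the paper's standing assumptions on $\mathcal{A}$ (nowhere-zero sections being invertible, together with torsion-freeness of free $\mathcal{A}$-modules) are used, implicitly or otherwise; the verification that the restriction of the extended form to each $V\subseteq U$ agrees with $\psi_V$ and commutes with the presheaf restriction maps is then a routine compatibility check in the style of the diagram chase employed in the proof of Theorem \ref{theorem1}.
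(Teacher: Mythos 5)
Your proposal matches the paper's argument: the paper likewise computes $\ker\phi=\mathcal{E}_1^\perp$ directly, chooses a free complement $\mathcal{E}_2$ of $\mathcal{E}_1$ in $\mathcal{E}$, uses the dual decomposition $\mathcal{E}^\ast=\mathcal{E}_1^\ast\oplus\mathcal{E}_2^\ast$ to extend a form on $\mathcal{E}_1$ by zero on $\mathcal{E}_2$ (your basis-completion is the same move phrased differently), and invokes Corollary~\ref{corollary1}$(1)$. Your concern about the existence of a free direct complement is well placed but mirrors an unproved step in the paper itself, which simply posits such an $\mathcal{E}_2$, so your proof is at the same level of rigor as the original.
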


\begin{proof}
That $\ker \phi= \mathcal{E}_1^\perp$ is clear. Now, let
$\mathcal{E}_2$ be a free sub-$\mathcal A$-module of $\mathcal E$
complementing $\mathcal{E}_1$. It follows (cf. \cite[p. 137,
relation (6.21)]{mallios} that
\[
\mathcal{E}^\ast= \mathcal{E}_1^\ast\oplus \mathcal{E}_2^\ast,
\]
so that if $U$ is open in $X$ and
\[\begin{array}{lll}
\psi\equiv (\psi_V)_{U\supseteq V,\ open}\in \mathcal{E}_1^\ast(U) &
\mbox{and} & \theta\equiv (\theta_V)_{U\supseteq V,\ open}\in
\mathcal{E}_2^\ast(U), \end{array}
\] then
\[
\Omega\equiv \psi+ \theta\in \mathcal{E}^\ast(U).
\]
If $V$ is open in $U$ and $s\in \mathcal{E}(V),$ so $s$ is uniquely
written as $s= r+t$ where $r\in \mathcal{E}_1(V)$ and $t\in
\mathcal{E}_2(V),$ then
\[
\Omega_V(s)= \psi_V(s)+ \theta_V(t).
\]
Consequently,
\[
\phi_U(\Omega)= ({\Omega_V}|_{\mathcal{E}_1(V)})_{U\supseteq V,\
open}= \psi;
\]
thus $\phi_U$ is surjective. Hence, applying Corollary
\ref{corollary1} $(1)$, we obtain an $\mathcal A$-isomorphism
\[
\mathcal{E}^\ast/\mathcal{E}_1^\perp\simeq \mathcal{E}_1^\ast.
\]
\end{proof}

Now, let us introduce the notion of \textit{$\mathcal
A$-projection.}

\begin{definition}
\emph{Let $\mathcal E$ be an $\mathcal A$-module, $\mathcal F$ and
$\mathcal G$ two \textit{supplementary sub-$\mathcal A$-modules} of
$\mathcal E$. (Thus, for every open subset $U\subseteq X$, every
section $s\in \mathcal{E}(U)$ can be uniquely written as $s= r+t$,
where $r\in \mathcal{F}(U)$ and $t\in \mathcal{G}(U).$) The
$\mathcal{A}$-\textit{endomorphism}
\[
\pi^\mathcal{F}\equiv (\pi^\mathcal{F}_U)_{X\supseteq U,\ open}\in
\emph{H}om_{\mathcal{A}}(\mathcal{E}, \mathcal{E}):=
\mathcal{E}nd_{\mathcal{A}}(\mathcal{E})
\]
such that, for any section $s\in \mathcal{E}(U)\equiv
\Gamma(\mathcal{E})(U):= \Gamma(U, \mathcal{E}),$
\[
\pi^\mathcal{F}_U(s)\equiv \pi^\mathcal{F}_U(r+ t):= r,
\]
where $s= r+t$ with $r\in \mathcal{F}(U)$ and $t\in \mathcal{G}(U),$
is called the \textbf{$\mathcal{A}$-projection onto} $\mathcal F$
(\textbf{parallel to} $\mathcal G$). In a similar way, one define
the \textbf{$\mathcal A$-projection onto} $\mathcal G$
(\textbf{parallel to} $\mathcal F$).}
\end{definition}

\begin{proposition}\label{proposition2}
Let $\mathcal E$ be a free $\mathcal A$-module, $\mathcal{E}_1$ and
$\mathcal{E}_2$ two free sub-$\mathcal A$-modules of $\mathcal E$
the direct sum of which is $\mathcal A$-isomorphic to $\mathcal E$,
$\pi_1\equiv \pi^{\mathcal{E}_1},$ $\pi_2\equiv \pi^{\mathcal{E}_2}$
the corresponding $\mathcal A$-projections. Then,
\[
\mathcal{E}^\ast= \mathcal{E}_1^\perp\oplus \mathcal{E}_2^\perp,
\]
and the $\mathcal A$-projections $\pi'_1\equiv
\pi^{\mathcal{E}_1^\perp},$ $\pi'_2\equiv \pi^{\mathcal{E}_2^\perp}$
associated with this direct decomposition are given by setting
\[\begin{array}{lll}
\pi'_{1, U}(\alpha):= (\alpha_V\circ \pi_{2, V})_{U\supseteq V,\
open} & \mbox{and} & \pi'_{2, U}(\alpha):= (\alpha_V\circ \pi_{1,
V})_{U\supseteq V,\ open} \end{array}
\]
for any $\alpha\equiv (\alpha_V)_{U\supseteq V,\ open}\in
\mathcal{E}^\ast(U).$
\end{proposition}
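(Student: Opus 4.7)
The plan is to define the two candidate maps $\pi'_1, \pi'_2$ directly by the formulas in the statement and verify that they are a pair of complementary $\mathcal{A}$-idempotents on $\mathcal{E}^\ast$ with images $\mathcal{E}_1^\perp$ and $\mathcal{E}_2^\perp$ respectively. Once that is established, the direct sum decomposition $\mathcal{E}^\ast=\mathcal{E}_1^\perp\oplus\mathcal{E}_2^\perp$ follows automatically, and the identification of the $\pi'_i$ as the associated $\mathcal{A}$-projections is immediate.

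First I would check that the prescription $\alpha\mapsto(\alpha_V\circ\pi_{2,V})_{U\supseteq V,\text{ open}}$ really produces an element of $\mathcal{E}^\ast(U)$ and a morphism of complete presheaves. Since $\pi_2=(\pi_{2,V})_V$ is already known to be an $\mathcal{A}$-morphism, it commutes with the restriction maps of $\mathcal{E}$, and $\alpha$ commutes with those of $\mathcal{A}$; composing these two facts gives the cocycle condition $\alpha_W\circ\pi_{2,W}\circ\beta^V_W=\delta^V_W\circ\alpha_V\circ\pi_{2,V}$ for $W\subseteq V\subseteq U$, so that $\pi'_{1,U}(\alpha)\in\mathcal{E}^\ast(U)$. $\mathcal{A}(U)$-linearity in $\alpha$ is clear, and the same argument repeated once more shows that $\pi'_{1}$ itself commutes with restrictions, hence lies in $\mathcal{E}nd_{\mathcal{A}}(\mathcal{E}^\ast)$. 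The same works for $\pi'_2$.

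Next, for any open $V\subseteq U$ and $s=r+t\in\mathcal{E}(V)=\mathcal{E}_1(V)\oplus\mathcal{E}_2(V)$, one computes $(\pi'_{1,U}(\alpha))_V(s)=\alpha_V(t)$. Taking $s\in\mathcal{E}_1(V)$ gives $t=0$, so the image of $\pi'_1$ lies in $\mathcal{E}_1^\perp$; conversely, if $\alpha\in\mathcal{E}_1^\perp(U)$, then $\alpha_V(r)=0$ so $\alpha_V(s)=\alpha_V(t)=(\pi'_{1,U}(\alpha))_V(s)$, showing $\pi'_1$ is the identity on $\mathcal{E}_1^\perp$. Therefore $\pi'_1$ is an idempotent with image exactly $\mathcal{E}_1^\perp$; the symmetric argument handles $\pi'_2$. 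Adding the two defining formulas gives $(\pi'_{1,U}(\alpha))_V(s)+(\pi'_{2,U}(\alpha))_V(s)=\alpha_V(t)+\alpha_V(r)=\alpha_V(s)$, i.e.\ $\pi'_1+\pi'_2=\mathrm{id}_{\mathcal{E}^\ast}$. Any $\alpha\in\mathcal{E}^\ast(U)$ then decomposes as $\pi'_{1,U}(\alpha)+\pi'_{2,U}(\alpha)$ with summands in $\mathcal{E}_1^\perp(U)$ and $\mathcal{E}_2^\perp(U)$, while the intersection $\mathcal{E}_1^\perp\cap\mathcal{E}_2^\perp$ vanishes because any form killing both $\mathcal{E}_1(V)$ and $\mathcal{E}_2(V)$ kills $\mathcal{E}(V)=\mathcal{E}_1(V)+\mathcal{E}_2(V)$; hence $\mathcal{E}^\ast=\mathcal{E}_1^\perp\oplus\mathcal{E}_2^\perp$ and the $\pi'_i$ are the associated $\mathcal{A}$-projections by construction.

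The only real subtlety, and hence the step I expect to be the main obstacle, is the verification that the formulas define genuine \emph{sheaf} morphisms rather than just a family of linear maps on sections; this is the cocycle-commutation bookkeeping alluded to in the first step. Everything else reduces to the elementary observation that, sectionwise, the formulas are the familiar classical projections of a dual module associated with a direct sum decomposition, which is why idempotency, complementarity, and the image characterisation fall out with no further assumption on $\mathcal{A}$ (no PID hypothesis is needed here, since freeness of $\mathcal{E}_1$ and $\mathcal{E}_2$ already guarantees the existence of the projections $\pi_1,\pi_2$ used in the construction).
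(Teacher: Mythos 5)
Your proof is correct and follows essentially the same route as the paper: check the formulas land in $\mathcal{E}_1^\perp$ and $\mathcal{E}_2^\perp$, use the identity $\alpha_V=\alpha_V\circ\pi_{1,V}+\alpha_V\circ\pi_{2,V}$ to obtain the sum decomposition, and kill the intersection by noting that a form vanishing on $\mathcal{E}_1(V)+\mathcal{E}_2(V)=\mathcal{E}(V)$ is zero (the paper cites its Theorem on the canonical pairing for this last fact, which you use implicitly). You are somewhat more thorough than the paper in that you also verify the cocycle condition making the $\pi'_i$ genuine $\mathcal{A}$-endomorphisms, and you check idempotency and complementarity explicitly, none of which changes the underlying argument.
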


The proof of Proposition \ref{proposition2} requires some part of
\cite[p. 404, Theorem 2.2]{malliosntumbaqm1}, which we restate here
for easy referencing.

\begin{theorem}\label{theorem2}
Let $(\mathcal{E}^\ast, \mathcal{E}; \mathcal{A})$ be the canonical
free $\mathcal A$-pairing determined by $\mathcal E$. Then, for any
open subset $U\subseteq X$,
\[
\dim \mathcal{E}^\ast(U)= \dim \mathcal{E}(U).
\]
If $\phi\in \mathcal{E}^\ast(U)$ and $\phi_U(s)=0$ for all $s\in
\mathcal{E}(U),$ then $\phi=0;$ on the other hand, if $\phi(s)=0$
for all $\phi\in \mathcal{E}^\ast(U),$ then $s=0.$
\end{theorem}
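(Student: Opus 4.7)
The plan is to reduce the assertion to the classical dual-basis construction carried out sheaf-theoretically. Since $\mathcal{E}$ is free, there exists a basis $(e_i)_{i \in I}$; hence for every open $U \subseteq X$ the $\mathcal{A}(U)$-module $\mathcal{E}(U)$ is free with basis $(e_i|_U)_{i \in I}$, so $\dim \mathcal{E}(U) = |I|$. (The invariance of this rank across opens is underwritten by the standing hypothesis that every nowhere-zero section of $\mathcal{A}$ is invertible.) It remains to produce an $\mathcal{A}(U)$-basis of $\mathcal{E}^\ast(U)$ of the same cardinality, and then to read the two non-degeneracy claims off the resulting expansion.

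For each $i \in I$, I would define a section $e_i^\ast \in \mathcal{E}^\ast(U)$ by setting
\[
e_{i, V}^\ast\Bigl(\sum_{j} a_j (e_j|_V)\Bigr) := a_i
\]
for every open $V \subseteq U$ and every expansion $\sum_j a_j (e_j|_V) \in \mathcal{E}(V)$ with coefficients in $\mathcal{A}(V)$. The verifications that each $e_{i, V}^\ast$ is $\mathcal{A}(V)$-linear, and that the family $(e_{i, V}^\ast)_{V \subseteq U}$ commutes with the restriction maps, are straightforward, so $e_i^\ast$ is a bona fide section of $\mathcal{E}^\ast$ over $U$. To see $(e_i^\ast)_{i \in I}$ is an $\mathcal{A}(U)$-basis of $\mathcal{E}^\ast(U)$, I would take an arbitrary $\phi \in \mathcal{E}^\ast(U)$, set $b_j := \phi_U(e_j|_U) \in \mathcal{A}(U)$, and use the compatibility of $(\phi_V)_V$ with restrictions together with $\mathcal{A}(V)$-linearity to force $\phi = \sum_j b_j e_j^\ast$; linear independence is immediate from $e_{i, U}^\ast(e_j|_U) = \delta_{ij}$. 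This yields $\dim \mathcal{E}^\ast(U) = |I| = \dim \mathcal{E}(U)$.

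The two non-degeneracy statements then fall out. If $\phi_U(s) = 0$ for every $s \in \mathcal{E}(U)$, then the coefficients $b_j = \phi_U(e_j|_U)$ all vanish, so $\phi = \sum_j b_j e_j^\ast = 0$. Conversely, if $s = \sum_j a_j (e_j|_U) \in \mathcal{E}(U)$ is annihilated by every element of $\mathcal{E}^\ast(U)$, the particular choices $\phi = e_j^\ast$ give $a_j = e_{j, U}^\ast(s) = 0$ for each $j$, whence $s = 0$. The only mildly delicate step, and the one I would expect to cost the most bookkeeping, is checking that the pointwise definition of $e_i^\ast$ on each $\mathcal{E}(V)$ assembles into a genuine element of the sheaf $\mathcal{E}^\ast$, i.e. commutes with the restriction maps $\alpha^U_V$ of the complete presheaf of sections; beyond that, the argument is the standard dual-basis manipulation carried out level-wise.
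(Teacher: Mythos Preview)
Your argument is correct: the dual-basis construction you outline is exactly the right mechanism, and the bookkeeping step you flag (compatibility of the $e_i^\ast$ with restriction maps) is genuine but routine. One small remark: in the context of this paper ``free $\mathcal{A}$-module'' means $\mathcal{E}\simeq\mathcal{A}^n$ with $n$ finite, so your index set $I$ is finite and the sums $\sum_j b_j e_j^\ast$ cause no trouble; you might as well say so explicitly rather than leave $I$ arbitrary.

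As for comparison with the paper: there is nothing to compare. The paper does not prove this theorem at all---it is quoted verbatim from \cite[p.~404, Theorem~2.2]{malliosntumbaqm1} purely ``for easy referencing'' (see the sentence introducing it). Your dual-basis argument is the natural proof and almost certainly coincides with what appears in that reference, but within the present paper the statement functions as an imported black box.
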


Now, let us get to the proof of Proposition \ref{proposition2}.

\begin{proof}(\textbf{Proposition \ref{proposition2}})
Fix an open set $U$ in $X$. That $(\alpha_V\circ \pi_{2,
V})_{U\supseteq V,\ open}$ and $(\alpha_V\circ \pi_{1,
V})_{U\supseteq V,\ open}$ belong to $\mathcal{E}_1^\perp(U)$ and
$\mathcal{E}_2^\perp(U),$ respectively, is obvious. For any open
$V\subseteq U,$ the relation
\[
\alpha_V= \alpha_V\circ \pi_{1, V}+ \alpha_V\circ \pi_{2, V}
\]
shows that
\[
\mathcal{E}^\ast(U)= \mathcal{E}_1^\perp(U)+ \mathcal{E}_2^\perp(U).
\]

Finally, suppose that there exists $\beta\equiv
(\beta_V)_{U\supseteq V,\ open}$ in $\mathcal{E}_1^\perp(U)\cap
\mathcal{E}_2^\perp(U);$ since $\beta_V(s)=0$ for any open
$V\subseteq U$ and any $s\in \mathcal{E}(V)= \mathcal{E}_1(V)\oplus
\mathcal{E}_2(V),$ it follows that $\beta=0$ (cf. Theorem
\ref{theorem2}). Thus,
\[
\mathcal{E}^\ast(U)= \mathcal{E}_1^\perp(U)\oplus
\mathcal{E}_2^\perp(U)
\]
and hence
\[
\mathcal{E}^\ast= \mathcal{E}_1^\perp\oplus \mathcal{E}_2^\perp
\]
as claimed.
\end{proof}

From \cite{malliosntumbaqm1}, we also recall the following result a
particular case of which will be needed below.

\begin{theorem}\label{theorem3}
Let $(\mathcal{F}, \mathcal{E}; \mathcal{A})$ be an $\mathcal
A$-pairing such that the right $\mathcal A$-kernel, i.e.
$\mathcal{E}^\perp$, is identically $0$. Moreover, let
$\mathcal{E}_0$ and $\mathcal{F}_0$ be sub-$\mathcal A$-modules of
$\mathcal E$ and $\mathcal F$, respectively. Then, there exist
natural $\mathcal A$-isomorphisms \textbf{into}:
\[\begin{array}{lll}
\mathcal{E}/\mathcal{F}_0^\perp\longrightarrow \mathcal{F}_0^\ast &
\mbox{and} & \mathcal{E}_0^\perp\longrightarrow
(\mathcal{E}/\mathcal{E}_0)^\ast. \end{array}
\]
\end{theorem}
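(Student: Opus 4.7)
The plan is to construct each of the two natural maps directly from the bilinear morphism $\phi$, to identify its kernel, and then to invoke either assertion $(1)$ of Corollary \ref{corollary1} (first isomorphism theorem for $\mathcal{A}$-modules) or Corollary \ref{corollary2} (universal property of quotient $\mathcal{A}$-modules) to obtain the claimed injection.

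For the first map $\mathcal{E}/\mathcal{F}_0^\perp \longrightarrow \mathcal{F}_0^\ast$, I would first build an $\mathcal{A}$-morphism $\mu\colon \mathcal{E}\longrightarrow \mathcal{F}_0^\ast$ by setting, for each open $U\subseteq X$ and each $s\in \mathcal{E}(U)$, the element $\mu_U(s)\in \mathcal{F}_0^\ast(U)$ to be the presheaf morphism whose $V$-component, for open $V\subseteq U$, sends $t\in \mathcal{F}_0(V)$ to $\phi_V(s|_V,\, t)\in \mathcal{A}(V)$. The bilinearity and naturality of $\phi$, combined with the torsion-freeness convention, make $\mu_U(s)$ a genuine element of $\mathcal{F}_0^\ast(U)$ and make $\mu=(\mu_U)$ compatible with restrictions. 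By the very definition of $\mathcal{F}_0^\perp$ as a sub-$\mathcal{A}$-module of $\mathcal{E}$, one has $\ker \mu = \mathcal{F}_0^\perp$. Corollary \ref{corollary1}$(1)$ then yields an $\mathcal{A}$-isomorphism $\mathcal{E}/\mathcal{F}_0^\perp \simeq \mathrm{Im}\,\mu \subseteq \mathcal{F}_0^\ast$, which is the desired isomorphism \emph{into}. Notice that for this first half, the hypothesis on the right kernel is not needed.

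For the second map $\mathcal{E}_0^\perp \longrightarrow (\mathcal{E}/\mathcal{E}_0)^\ast$, I would symmetrically build an $\mathcal{A}$-morphism $\nu\colon \mathcal{F}\longrightarrow \mathcal{E}^\ast$ by setting $\nu_U(t)$, for $t\in \mathcal{F}(U)$, to be the element of $\mathcal{E}^\ast(U)$ whose $V$-component sends $s\in \mathcal{E}(V)$ to $\phi_V(s,\, t|_V)$. The hypothesis that the right kernel $\mathcal{E}^\perp$ is identically zero is exactly the statement that $\nu$ is injective. Restricting $\nu$ to the sub-$\mathcal{A}$-module $\mathcal{E}_0^\perp \subseteq \mathcal{F}$, I observe that for $t\in \mathcal{E}_0^\perp(U)$ the functional $\nu_U(t)$ vanishes on $\mathcal{E}_0$ by the very definition of $\mathcal{E}_0^\perp$, so $\mathcal{E}_0 \subseteq \ker \nu_U(t)$. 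Corollary \ref{corollary2} then provides a unique $\widetilde{\nu_U(t)}\in (\mathcal{E}/\mathcal{E}_0)^\ast(U)$ with $\nu_U(t)=\widetilde{\nu_U(t)}\circ p_U$, where $p\colon \mathcal{E}\to \mathcal{E}/\mathcal{E}_0$ is the canonical projection. The assignment $t\mapsto \widetilde{\nu_U(t)}$ is $\mathcal{A}(U)$-linear and, by the uniqueness clause of Corollary \ref{corollary2}, is compatible with restrictions because $\nu$ and $p$ are; it is injective since $\nu$ is injective and $p$ is epimorphic. This is the sought $\mathcal{A}$-isomorphism into.

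The main obstacle is not conceptual but notational: one must verify that the set-theoretic recipes above indeed deliver $\mathcal{A}$-morphisms of sheaves (not merely maps of section spaces), i.e.~that $\mu_U(s)$ and $\nu_U(t)$ really belong to $\mathcal{F}_0^\ast(U)$ and $\mathcal{E}^\ast(U)$, that the families $(\mu_U)$ and $(\nu_U)$ intertwine the restriction maps of the complete presheaves $\Gamma(\mathcal{E})$, $\Gamma(\mathcal{F}_0^\ast)$, $\Gamma(\mathcal{E}^\ast)$, and that the uniqueness of the factorisation in Corollary \ref{corollary2} carries the compatibility of $\nu$ over to its lift $\widetilde{\nu}$. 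Once this sheaf-level bookkeeping is in place---which, as in the proof of Theorem \ref{theorem1}, reduces to naturality of $\phi$ in each argument---both isomorphisms into drop out of the first isomorphism theorem and the universal property, respectively.
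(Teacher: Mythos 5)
The paper does not prove Theorem \ref{theorem3}: it is quoted verbatim from \cite{malliosntumbaqm1} as a prerequisite result, with no in-text argument. So there is no in-paper proof to compare against; your proposal is a standalone reconstruction, and it is essentially correct. Your construction of the two maps is the natural one---build the insertion-type morphisms $\mu\colon \mathcal{E}\to\mathcal{F}_0^\ast$ and $\nu\colon \mathcal{F}\to\mathcal{E}^\ast$ directly from $\phi$, identify $\ker\mu = \mathcal{F}_0^{\top^\phi}$ and observe that the nullity of the right kernel is precisely the injectivity of $\nu$, then quote the first isomorphism theorem (Corollary \ref{corollary1}$(1)$) for the first map and the universal property of quotients (Corollary \ref{corollary2}) for the second. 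Your remark that the hypothesis $\mathcal{E}^\perp=0$ is irrelevant to the first map is also correct.

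Two small comments. First, the torsion-freeness convention is not needed anywhere here; bilinearity of $\phi_V$ and compatibility of the $\phi_V$ with restriction maps already guarantee that $\mu_U(s)$ and $\nu_U(t)$ are genuine elements of $\mathcal{F}_0^\ast(U)$ and $\mathcal{E}^\ast(U)$. Second, in the lift of $\nu_U(t)$ through $p_U\colon\mathcal{E}|_U\to(\mathcal{E}/\mathcal{E}_0)|_U$ there is a subtlety you slide past: $(\mathcal{E}/\mathcal{E}_0)(V)$ is in general \emph{not} the naive quotient $\mathcal{E}(V)/\mathcal{E}_0(V)$ (the quotient sheaf is a sheafification), so one cannot simply factor section-by-section. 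The right justification is either to apply Corollary \ref{corollary2} directly at the sheaf level over $U$ (which is how it is stated), or to observe that $\mathcal{A}|_U$ is a sheaf, so a morphism out of the quotient \emph{presheaf} extends uniquely to the quotient \emph{sheaf} by the universal property of sheafification. Either way the gap closes, but it deserves an explicit sentence; as written, the phrase ``by the uniqueness clause of Corollary \ref{corollary2}, is compatible with restrictions'' reads as if the factorisation happens levelwise, which is not literally what the quotient sheaf allows.
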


An  interesting result may be derived from Theorem \ref{theorem3},
viz.:

\begin{theorem}\label{theorem4}
Let $\mathcal E$ be a free $\mathcal A$-module, $\mathcal{E}_1$ a
free sub-$\mathcal A$-module of $\mathcal E$, and $\phi$ the
canonical $\mathcal A$-morphism of $\mathcal E$ onto $($the free
sub-$\mathcal A$-module$)$ $\mathcal{E}/\mathcal{E}_1$. The
$\mathcal A$-morphism
\[
\Lambda\equiv (\Lambda_U)_{X\supseteq U,\ open}:
(\mathcal{E}/\mathcal{E}_1)^\ast\longrightarrow \mathcal{E}^\ast
\]
such that, given any open subset $U\subseteq X$ and a section
$\psi\equiv (\psi_V)_{U\supseteq V,\ open}\in
(\mathcal{E}/\mathcal{E}_1)^\ast(U):=
\emph{\mbox{H}}om_{\mathcal{A}|_U}((\mathcal{E}/\mathcal{E}_1)|_U,
\mathcal{A}|_U),$
\[
\Lambda_U(\psi):= (\psi_V\circ \phi_V)_{U\supseteq V,\ open}
\]
is an $\mathcal A$-isomorphism of $(\mathcal{E}/\mathcal{E}_1)^\ast$
onto $\mathcal{E}_1^\perp,$ where $\mathcal{E}_1^\perp$ is the
$\mathcal A$-orthogonal of $\mathcal{E}_1$ in the canonical
$\mathcal A$-pairing $(\mathcal{E}^\ast, \mathcal{E}; \mathcal{A}).$
\end{theorem}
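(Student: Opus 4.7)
The plan is to verify three things in sequence: (i) $\Lambda$ genuinely lands in $\mathcal{E}_1^\perp$, (ii) $\Lambda$ is injective, and (iii) $\Lambda$ is surjective onto $\mathcal{E}_1^\perp$. Since $\Lambda$ is already prescribed as an $\mathcal{A}$-morphism into $\mathcal{E}^\ast$, what remains is really a bijectivity verification onto the prescribed image sub-$\mathcal{A}$-module.

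For (i), fix an open $U\subseteq X$ and $\psi \equiv (\psi_V)_{V\subseteq U,\ open}\in (\mathcal{E}/\mathcal{E}_1)^\ast(U)$. For every open $V\subseteq U$ the canonical morphism $\phi_V$ annihilates $\mathcal{E}_1(V)$ (it is the projection onto the quotient by $\mathcal{E}_1$), so $\psi_V\circ \phi_V$ vanishes on $\mathcal{E}_1(V)$. In the canonical pairing $(\mathcal{E}^\ast,\mathcal{E};\nu)$, this is precisely the defining condition for membership in $\mathcal{E}_1^\perp(U)$, so $\Lambda_U(\psi)\in \mathcal{E}_1^\perp(U)$.

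For (ii), suppose $\Lambda_U(\psi)=0$, which means $\psi_V\circ \phi_V = 0$ for every open $V\subseteq U$. Since each $\phi_V$ is surjective onto $(\mathcal{E}/\mathcal{E}_1)(V)$, this forces $\psi_V=0$ for all such $V$, hence $\psi=0$. For (iii), let $\alpha\equiv (\alpha_V)_{V\subseteq U,\ open}\in \mathcal{E}_1^\perp(U)$. Then $\mathcal{E}_1(V)\subseteq \ker \alpha_V$ for every open $V\subseteq U$. Applying the universal property of quotient $\mathcal{A}$-modules (Corollary \ref{corollary2}) to the restriction $\mathcal{A}|_U$-setup over $(\mathcal{E}/\mathcal{E}_1)|_U$ yields a unique $\widetilde{\alpha}\in \mathrm{H}om_{\mathcal{A}|_U}((\mathcal{E}/\mathcal{E}_1)|_U,\mathcal{A}|_U) = (\mathcal{E}/\mathcal{E}_1)^\ast(U)$ with $\alpha = \widetilde{\alpha}\circ \phi$, and this $\widetilde{\alpha}$ is sent by $\Lambda_U$ to $\alpha$ by construction.

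The main obstacle I expect is the sheaf-theoretic bookkeeping at step (iii): one must verify that the locally produced family $(\widetilde{\alpha}_V)_{V\subseteq U,\ open}$ is genuinely a section of the presheaf $\mathcal{H}om_{\mathcal{A}}(\mathcal{E}/\mathcal{E}_1,\mathcal{A})$ over $U$, i.e. compatible with the restriction maps $\beta^V_W$ of the complete presheaf of $(\mathcal{E}/\mathcal{E}_1)^\ast$. This compatibility is forced by the uniqueness clause in the universal property: for nested $W\subseteq V\subseteq U$, both $\widetilde{\alpha}_V\circ \beta^V_W$ (viewed as a morphism of the restricted quotient) and $\widetilde{\alpha}_W$ make the diagram of Corollary \ref{corollary2} commute with the restriction of $\alpha$, and uniqueness forces them to agree. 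Once this is in hand, the three steps combine to give the required $\mathcal{A}$-isomorphism $(\mathcal{E}/\mathcal{E}_1)^\ast \xrightarrow{\sim} \mathcal{E}_1^\perp$.
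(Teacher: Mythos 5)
Your proof is correct and follows essentially the same route as the paper: establish injectivity from the surjectivity of each $\phi_V$, observe that the image lands in $\mathcal{E}_1^\perp$ since $\phi_V$ annihilates $\mathcal{E}_1(V)$, and derive surjectivity onto $\mathcal{E}_1^\perp$ from the universal property of quotient $\mathcal{A}$-modules (Corollary \ref{corollary2}). The one place you go further than the paper is in explicitly verifying (via the uniqueness clause) that the locally produced family $(\widetilde{\alpha}_V)$ is compatible with restrictions — a point the paper dispatches with ``it is clear''; this extra care is welcome and does not change the substance of the argument.
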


\begin{proof}
It is clear that $\Lambda$ is indeed an $\mathcal A$-morphism. Now,
let us fix an open set $U$ in $X$ and let us consider a section
$\psi\equiv (\psi_V)_{U\supseteq V,\ open}\in
(\mathcal{E}/\mathcal{E}_1)^\ast(U).$ Then, $\Lambda_U(\psi)=0$ if
for any open $V$ in $U$ and $s\in \mathcal{E}(V),$
\[
\Lambda_U(\psi)(s)=0.
\]
But
\[
\Lambda_U(\psi)(s)= (\psi_V\circ \phi_V)(s)= \psi_V(\phi_V(s))=0,
\]
therefore, by Theorem \ref{theorem2},
\[
\psi_V=0.
\]
It follows that
\[
\ker \Lambda_U=0,
\]
and consequently
\[
\ker \Lambda=0;
\]
in other words, $\Lambda$ is injective.

Next, for every $\psi\equiv (\psi_V)_{U\supseteq V,\ open}\in
(\mathcal{E}/\mathcal{E}_1)^\ast(U),$ where the open set $U$ is
fixed in $X$,
\[
\Lambda_U(\psi)(s)= (\psi_V\circ \phi_V)(s)=0,
\]
where $s$ is any element in $\mathcal{E}_1(V);$ that is
\[
\Lambda_U(\psi)\in \mathcal{E}_1^\perp(U),
\]
from which we deduce that
\[
\mbox{Im} \Lambda\subseteq \mathcal{E}_1^\perp.
\]

Finally, still under the assumption that $U$ is an open set fixed in
$X$, let us consider, for every open $V\subseteq U,$ the following
commutative diagram
\[
\xymatrix{\mathcal{E}(V)\ar[r]^{\phi_V}\ar[dr]_{\psi_V\circ \phi_V}
& (\mathcal{E}/\mathcal{E}_1)(V)\ar@{.>}[d]^{\psi_V}\\ &
\mathcal{A}(V)}.
\]
The \textit{universal property of quotient $\mathcal A$-modules}
(cf. Corollary \ref{corollary2}) shows that, given an element
$\sigma_V\in \mbox{H}om_{\mathcal{A}(V)}(\mathcal{E}(V),
\mathcal{A}(V))$ such that $\ker \phi_V\subseteq \ker \sigma_V$,
i.e., $\sigma_V(\mathcal{E}_1(V))= 0$, there is a unique $\psi_V\in
\mbox{H}om_{\mathcal{A}(V)}((\mathcal{E}/\mathcal{E}_1)(V),
\mathcal{A}(V))$ such that
\[
\sigma_V= \psi_V\circ \phi_V.
\]
It is clear that the family $\sigma\equiv (\sigma_V)_{U\supseteq V,\
open}$ is an $\mathcal A$-morphism $\mathcal{E}|_U\longrightarrow
\mathcal{A}|_U$ satisfying the property that:
\[
\sigma= \psi\circ \phi.
\]
Thus, $\Lambda$ is surjective and the proof is finished.
\end{proof}

As a result, based essentially on everything above, we have

\begin{theorem}\label{theorem6}
Let $(\mathcal{E}^\ast, \mathcal{E}; \mathcal{A})$ be the canonical
free $\mathcal A$-pairing and $\mathcal{E}_1$ a free sub-$\mathcal
A$-module of $\mathcal E$. Then,
\begin{enumerate}
\item [{$(1)$}] $(\mathcal{E}_1^\perp)^\top= \mathcal{E}_1$ within
an $\mathcal A$-isomorphism.
\item [{$(2)$}] $\mathcal{E}_1$ has finite dimension if and only if
$\mathcal{E}_1^\perp$ has finite codimension in $\mathcal{E}^\ast$,
and then one has
\[
\dim \mathcal{E}_1=
\emph{\mbox{codim}}_{\mathcal{E}^\ast}\mathcal{E}_1^\perp.
\]
\item [{$(3)$}] $\mathcal{E}_1$ has finite codimension in $\mathcal
E$ if and only if $\mathcal{E}_1^\perp$ has finite dimension, and
\[
\emph{\mbox{codim}}_\mathcal{E}\mathcal{E}_1= \dim
\mathcal{E}_1^\perp.
\]
\end{enumerate}
\end{theorem}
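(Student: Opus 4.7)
The plan is to reduce all three assertions to three results already in hand: Corollary \ref{corollary3} ($\mathcal{E}^\ast/\mathcal{E}_1^\perp\simeq \mathcal{E}_1^\ast$), Theorem \ref{theorem4} ($(\mathcal{E}/\mathcal{E}_1)^\ast\simeq \mathcal{E}_1^\perp$), and the rank identity plus nondegeneracy supplied by Theorem \ref{theorem2}. The bookkeeping is then essentially the classical linear-algebra computation, only repeated sectionwise and reassembled via the sheaf structure.

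For assertion $(2)$ I would simply chain
\[
\mathrm{codim}_{\mathcal{E}^\ast}\mathcal{E}_1^\perp
= \dim\bigl(\mathcal{E}^\ast/\mathcal{E}_1^\perp\bigr)
= \dim \mathcal{E}_1^\ast
= \dim \mathcal{E}_1,
\]
using Corollary \ref{corollary3} for the middle equality and Theorem \ref{theorem2} for the last; finiteness is preserved along each step, which delivers the biconditional. Symmetrically, for assertion $(3)$ I would chain
\[
\dim \mathcal{E}_1^\perp
= \dim\bigl(\mathcal{E}/\mathcal{E}_1\bigr)^\ast
= \dim\bigl(\mathcal{E}/\mathcal{E}_1\bigr)
= \mathrm{codim}_{\mathcal{E}}\mathcal{E}_1,
\]
by Theorem \ref{theorem4} and Theorem \ref{theorem2}. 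In each chain, the two \emph{if-and-only-if} implications drop out from the fact that the isomorphism in question transports finite-dimensional sections to finite-dimensional sections.

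The real content is assertion $(1)$. The inclusion $\mathcal{E}_1 \subseteq (\mathcal{E}_1^\perp)^\top$ is tautological from the definition of the left kernel in the canonical $\mathcal{A}$-pairing $(\mathcal{E}^\ast,\mathcal{E};\mathcal{A})$. To produce the reverse inclusion (up to $\mathcal{A}$-isomorphism) I would work sectionwise: fix an open $U\subseteq X$ and a section $s\in (\mathcal{E}_1^\perp)^\top(U)$, and, arguing by contradiction, suppose the image $\bar s\in (\mathcal{E}/\mathcal{E}_1)(U)$ is non-zero. Theorem \ref{theorem2} applied to the free $\mathcal{A}$-module $\mathcal{E}/\mathcal{E}_1$ then produces a $\bar\psi\in (\mathcal{E}/\mathcal{E}_1)^\ast(U)$ with $\bar\psi_U(\bar s)\neq 0$. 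Pulling $\bar\psi$ back along the quotient morphism $\phi$ via the isomorphism $\Lambda$ of Theorem \ref{theorem4} yields $\psi := \Lambda_U(\bar\psi) = (\bar\psi_V\circ\phi_V)_{V\subseteq U} \in \mathcal{E}_1^\perp(U)$ satisfying $\psi_U(s)=\bar\psi_U(\bar s)\neq 0$, which contradicts $s\in (\mathcal{E}_1^\perp)^\top(U)$. Hence $\bar s=0$, i.e.\ $s\in \mathcal{E}_1(U)$, and the sheaf coherence of $\Lambda$ assembles these sectionwise equalities into the desired $\mathcal{A}$-isomorphism.

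The main obstacle I anticipate is the legitimate application of Theorem \ref{theorem2} to the quotient $\mathcal{E}/\mathcal{E}_1$: that result requires $\mathcal{E}/\mathcal{E}_1$ to be a free $\mathcal{A}$-module, which is why the statement of Theorem \ref{theorem6} implicitly leans on the standing assumption that $\mathcal{E}_1$ admits a free complement in $\mathcal{E}$ (the same hypothesis that powers Corollary \ref{corollary3}). Once this freeness is secured --- it is automatic when $\mathcal{A}$ is a PID algebra sheaf, as explained in the introduction --- the three assertions all collapse to short arguments built on the two duality isomorphisms and the separating-points property of the canonical pairing.
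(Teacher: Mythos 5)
Your treatment of assertions $(2)$ and $(3)$ is essentially identical to the paper's: the paper also routes $(2)$ through Corollary~\ref{corollary3} together with $\mathcal{E}_1^\ast\simeq \mathcal{E}_1$, and routes $(3)$ through Theorem~\ref{theorem4} together with the freeness of $\mathcal{E}/\mathcal{E}_1$ (the paper makes the latter explicit by invoking a free complement $\mathcal{E}_2$ with $\mathcal{E}/\mathcal{E}_1\simeq\mathcal{E}_2$).

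For assertion $(1)$, however, you take a genuinely different route. The paper does not reason through the quotient at all: it chooses a free complement $\mathcal{E}_2$ of $\mathcal{E}_1$, invokes Proposition~\ref{proposition2} to get $\mathcal{E}^\ast\simeq\mathcal{E}_1^\perp\oplus\mathcal{E}_2^\perp$, decomposes any $s\in(\mathcal{E}_1^\perp)^\top(U)$ as $s=r+t$ with $r\in\mathcal{E}_1(U)$, $t\in\mathcal{E}_2(U)$, observes that $t$ is then annihilated by both summands $\mathcal{E}_1^\perp(U)$ and $\mathcal{E}_2^\perp(U)$, i.e.\ by all of $\mathcal{E}^\ast(U)$, and concludes $t=0$ from Theorem~\ref{theorem2}. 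You instead pass to the quotient $\mathcal{E}/\mathcal{E}_1$, apply the separating-points half of Theorem~\ref{theorem2} there to manufacture a functional $\bar\psi$ detecting $\bar s\neq 0$, and pull it back along $\Lambda$ (Theorem~\ref{theorem4}) to land a contradicting element of $\mathcal{E}_1^\perp(U)$. Both arguments are correct and both ultimately rest on the same implicit hypothesis you flag --- that $\mathcal{E}_1$ admit a free complement, so that $\mathcal{E}/\mathcal{E}_1$ is free --- and on the same separating-points input from Theorem~\ref{theorem2}. The trade-off is which auxiliary result carries the weight: the paper leans on the direct-sum decomposition of Proposition~\ref{proposition2}, while you lean on the $\Lambda$-isomorphism of Theorem~\ref{theorem4}; your version has the small advantage of making assertions $(1)$ and $(3)$ share the same machinery, while the paper's version avoids passing to the quotient and is slightly more self-contained at the section level.
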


\begin{proof}
\textit{Assertion $(1).$} Let $\mathcal{E}_2$ be a free
sub-$\mathcal A$-module of $\mathcal E$, complementing
$\mathcal{E}_1.$ By Proposition \ref{proposition2},
\[
\mathcal{E}^\ast\simeq \mathcal{E}_1^\perp\oplus
\mathcal{E}_2^\perp.
\]
We already know that $\mathcal{E}_1\subseteq
(\mathcal{E}_1^\perp)^\top.$ Now, consider a section $s\in
(\mathcal{E}_1^\perp)^\top(U);$ there exist $r\in \mathcal{E}_1(U)$
and $t\in \mathcal{E}_2(U)$ such that $s= r+ t.$ The section $t$ is
orthogonal to $\mathcal{E}^\perp_2(U)$, and since $r$ and $s$ are
orthogonal to $\mathcal{E}_1^\perp(U),$ we then have that $t$ is
orthogonal to $\mathcal{E}_1^\perp(U)\oplus
\mathcal{E}_2^\perp(U)\simeq \mathcal{E}^\ast(U).$ It follows from
Theorem \ref{theorem2} that $t=0$; thus
$(\mathcal{E}_1^\perp)^\top(U)\subseteq \mathcal{E}_1(U),$ and hence
$(\mathcal{E}_1^\perp)^\top\subseteq \mathcal{E}_1.$

\textit{Assertion $(2).$} Since $\mathcal{E}_1$ is free, it follows
that $\mathcal{E}_1^\ast\simeq \mathcal{E}_1$ (cf. \cite[p. 298,
(5.2)]{mallios}). Thus, $\mathcal{E}_1$ has finite dimension if and
only if $\mathcal{E}_1^\ast$ has finite dimension, and
\[
\dim \mathcal{E}_1^\ast= \dim \mathcal{E}_1.
\]
But, by Corollary \ref{corollary3},
$\mathcal{E}^\ast/\mathcal{E}_1^\perp$ is $\mathcal A$-isomorphic to
$\mathcal{E}_1^\ast,$ therefore
\[
\dim \mathcal{E}_1=
\mbox{codim}_{\mathcal{E}^\ast}\mathcal{E}_1^\perp.
\]

\textit{Assertion $(3).$} Let $\mathcal{E}_2$ be a free
sub-$\mathcal A$-module of $\mathcal E$ complementing
$\mathcal{E}_1$, that is $\mathcal{E}= \mathcal{E}_1\oplus
\mathcal{E}_2.$ But $\mathcal{E}/\mathcal{E}_1$ is $\mathcal
A$-isomorphic to $\mathcal{E}_2$ (cf. \cite{malliosntumbaqm2}),
therefore $\mathcal{E}/\mathcal{E}_1$ is free; consequently
$\mathcal{E}/\mathcal{E}_1$ has finite dimension if and only if
$(\mathcal{E}/\mathcal{E}_1)^\ast$ has finite dimension, and one has
\[
(\mathcal{E}/\mathcal{E}_1)^\ast\simeq \mathcal{E}/\mathcal{E}_1
\]
so that
\[
\mbox{codim}_\mathcal{E}\mathcal{E}_1= \dim
\mathcal{E}/\mathcal{E}_1= \dim (\mathcal{E}/\mathcal{E}_1)^\ast.
\]
But, by Theorem \ref{theorem4},
$(\mathcal{E}/\mathcal{E}_1)^\ast\simeq \mathcal{E}_1^\perp$ within
an $\mathcal A$-isomorphism, so the assertion is corroborated.
\end{proof}

\section{Biorthogonality in dual free $\mathcal A$-modules}

Let us introduce a set of notions we will be concerned with in the
sequel.

\begin{definition}\label{definition2}
An $\mathcal A$-module $\mathcal E$ is called a \textbf{locally free
$\mathcal A$-module of varying finite rank} if there exist an open
covering $\mathcal{U}\equiv (U_\alpha)_{\alpha\in I}$ of $X$ and
numbers $n(\alpha)\in \mathbb{N}$ for every open set $U_\alpha$ such
that
\[
\mathcal{E}|_{U_\alpha}= \mathcal{A}^{n(\alpha)}|_{U_\alpha}.
\]
The open covering $\mathcal{U}$ is called a \textbf{local frame.}
\end{definition}

It is clear that if the topological space $X$ is \textit{connected}
the \textit{local} (\textit{constant}) \textit{ranks} are
\textit{equal,} and thus locally free $\mathcal A$-modules of
varying finite rank on a connected topological space are
\textit{vector sheaves.} See \cite[p. 127, Definition 4.3]{mallios}
for vector sheaves. In particular, if the algebra sheaf $\mathcal A$
is a PID, we have the following result.

\begin{corollary}
Let $X$ be a connected topological space, $\mathcal A$ a PID algebra
sheaf on $X$ and $\mathcal E$ an $\mathcal A$-module on $X$. Then,
every sub-$\mathcal A$-module $\mathcal F$ of $\mathcal E$ is a
locally free sub-$\mathcal A$-module of finite rank, i.e. a vector
sheaf. Consequently, if $\mathcal{F}_1$ and $\mathcal{F}_2$ are
sub-$\mathcal A$-modules of $\mathcal E$, then $\mathcal{F}_1\cap
\mathcal{F}_2$ is a vector sheaf.
\end{corollary}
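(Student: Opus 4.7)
The plan is to reduce to the local free case and then invoke the PID assumption section-wise. Since $X$ is connected and $\mathcal{E}$ is assumed to be at least locally free of varying finite rank (in the spirit of Definition \ref{definition2}), the remark just before the statement gives that $\mathcal{E}$ is in fact a vector sheaf. Hence there is an open cover $(U_{\alpha})_{\alpha\in I}$ of $X$ together with a fixed integer $n$ with $\mathcal{E}|_{U_{\alpha}}\cong \mathcal{A}|_{U_{\alpha}}^{n}$ for every $\alpha$. On any such $U_{\alpha}$, the restriction $\mathcal{F}|_{U_{\alpha}}$ becomes a sub-$\mathcal{A}|_{U_{\alpha}}$-module of $\mathcal{A}|_{U_{\alpha}}^{n}$, which is exactly the setting recalled in the introduction where the PID hypothesis on $\mathcal{A}$ yields section-wise freeness.

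Next, I would promote section-wise freeness to local freeness as a sheaf. For any open $V\subseteq U_{\alpha}$ the ring $\mathcal{A}(V)$ is a PID, so $\mathcal{F}(V)$ is a free $\mathcal{A}(V)$-module of rank at most $n$. Fixing $x\in U_{\alpha}$, I would take a sufficiently small open neighborhood $V\ni x$ on which the rank of $\mathcal{F}(V)$ realizes its local maximum; the structure theorem for submodules of finitely generated free modules over a PID then produces sections $s_{1},\dots,s_{k}\in \mathcal{F}(V)$ whose restrictions to every smaller open $W\subseteq V$ still form a basis of $\mathcal{F}(W)$. This yields a sheaf isomorphism $\mathcal{F}|_{V}\cong \mathcal{A}|_{V}^{k}$, so $\mathcal{F}$ is locally free of varying finite rank. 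Invoking connectedness of $X$ once more (together with the remark cited above), the local ranks must coincide, hence $\mathcal{F}$ is a vector sheaf.

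The second assertion is then an immediate consequence: $\mathcal{F}_{1}\cap \mathcal{F}_{2}$ is, by construction, a sub-$\mathcal{A}$-module of $\mathcal{E}$, and therefore the first part applies verbatim to conclude that $\mathcal{F}_{1}\cap \mathcal{F}_{2}$ is a vector sheaf.

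The main obstacle is the passage from section-wise freeness to local freeness as a sheaf, i.e.\ from the fact that each $\mathcal{F}(V)$ is abstractly a free $\mathcal{A}(V)$-module to the existence of compatible bases persisting under restriction to smaller opens. The section-wise statement alone does not yield a local frame, so one has to use the PID structure theory (invariant factors, or equivalently the fact that a submodule of $\mathcal{A}(V)^{n}$ admits a basis adapted to a basis of $\mathcal{A}(V)^{n}$) and then argue that such an adapted basis, chosen over a sufficiently small $V$, restricts to an adapted basis over every $W\subseteq V$. Once this uniformity is secured, the remaining steps are essentially bookkeeping.
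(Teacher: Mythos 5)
The paper itself offers no argument here --- the proof is the single word ``Obvious'' --- so the only thing to evaluate is the internal logic of your proposal. You correctly read the implicit hypothesis (the conclusion is false for a general $\mathcal A$-module $\mathcal E$, so $\mathcal E$ must be free or at least a vector sheaf, which your reduction to $\mathcal E|_{U_\alpha}\cong\mathcal A|_{U_\alpha}^n$ implements), and you correctly identify the crux: passing from section-wise freeness (each $\mathcal F(V)$ is abstractly a free $\mathcal A(V)$-module) to local freeness of $\mathcal F$ as a sheaf (an isomorphism $\mathcal F|_V\cong\mathcal A|_V^k$, equivalently a basis of $\mathcal F(V)$ that remains a basis of $\mathcal F(W)$ after restriction, compatibly for all $W\subseteq V$).

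But your argument does not actually close that gap; it restates it. The Smith-normal-form/invariant-factor theorem produces a basis of $\mathcal F(V)$ adapted to $\mathcal E(V)\cong\mathcal A(V)^n$, yet it is silent about what happens under the restriction map $\mathcal A(V)\to\mathcal A(W)$. Two distinct failures are possible and neither is ruled out in the proposal: the restricted sections $s_1|_W,\dots,s_k|_W$ may fail to be $\mathcal A(W)$-linearly independent (linear independence is not preserved by an arbitrary ring homomorphism), and $\mathcal F(W)$ may strictly contain the $\mathcal A(W)$-span of the $s_i|_W$ (a sub-sheaf can have ``more'' sections over a smaller open set than are obtained by restricting from the larger one). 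Nor is the choice of a ``sufficiently small $V$ realizing a local maximum of the rank'' justified: the rank of the free $\mathcal A(V)$-module $\mathcal F(V)$ is not monotone in $V$, so such a $V$ need not exist, and even if it did, having maximal rank over $V$ does not by itself guarantee the compatibility of bases under restriction. You acknowledge this as ``the main obstacle'' and say one ``has to argue that such an adapted basis restricts to an adapted basis,'' but that argument is precisely the missing content; without it the proposal does not establish the Example the corollary rests on, and hence does not prove the corollary. The second sentence of the statement and the use of connectedness to equalize the local ranks are fine once the first sentence is in place, but they inherit the same gap.
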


\begin{proof}
Obvious.
\end{proof}

\begin{example}
\emph{Consider a free $\mathcal A$-module $\mathcal E$, where
$\mathcal A$ is a PID-algebra sheaf. Then, every sub-$\mathcal
A$-module of $\mathcal E$ is a locally free $\mathcal A$-module of
varying finite rank.}
\end{example}

\begin{definition} Let
$(\mathcal{F}, \mathcal{E}; \phi)$ be a pairing of \textsf{free
$\mathcal A$-modules} $\mathcal E$ and $\mathcal F$.
\begin{enumerate}
\item [{$(i)$}] $(\mathcal{F}, \mathcal{E}; \phi)$  is called an
\textbf{orthogonally convenient $\mathcal A$-pairing} if for all
free sub-$\mathcal A$-modules $\mathcal{E}_0$ and $\mathcal{F}_0$ of
$\mathcal E$ and $\mathcal F$, respectively, their orthogonal
$\mathcal{E}_0^{\perp_\phi}$ and $\mathcal{F}_0^{\top^\phi}$ are
free sub-$\mathcal A$-modules of $\mathcal F$ and $\mathcal E$,
respectively.
\item [{$(ii)$}] $(\mathcal{F}, \mathcal{E}; \phi)$ is called a
\textbf{locally orthogonally convenient $\mathcal A$-pairing} if for
all locally free sub-$\mathcal A$-modules of varying finite rank
$\mathcal{E}_0$ and $\mathcal{F}_0$ of $\mathcal E$ and $\mathcal
F$, respectively, their orthogonal $\mathcal{E}_0^{\perp_\phi}$ and
$\mathcal{F}^{\top^\phi}$ are locally free sub-$\mathcal A$-modules
of varying finite rank of $\mathcal F$ and $\mathcal E$,
respectively.
\end{enumerate}
\end{definition}

\begin{definition}
The $\mathcal A$-pairing $(\mathcal{E}^\ast, \mathcal{E}; \phi)$,
where $\mathcal E$ is a free $\mathcal A$-module and such that for
every open $U\subseteq X$,
\[
\phi_U(\psi, r):= \psi_U(r),
\]
where $\psi\in \mathcal{E}^\ast(U):=
\mbox{H}om_{\mathcal{A}|_U}(\mathcal{E}|_U, \mathcal{A}|_U)$ and
$r\in \mathcal{E}(U)$, is called the \textbf{canonical
$\mathcal{A}$-pairing} of $\mathcal E$  and $\mathcal{E}^\ast$.
\end{definition}

For every open $U\subseteq X,$ let $(\varepsilon_i^U)$ and
$(\varepsilon_i^{\ast U})$ be the canonical bases (of sections) of
$\mathcal{E}(U)$ and $\mathcal{E}^\ast(U)$, where $\mathcal E$ and
$\mathcal{E}^\ast$ are \textit{canonically paired into} $\mathcal
A$. The family $\phi\equiv (\phi_U)_{X\supseteq U,\ open}$ such that
\[
\phi_U(\varepsilon_i^U):= \varepsilon_i^{\ast U}
\]
is an $\mathcal A$-isomorphism of $\mathcal E$ onto
$\mathcal{E}^\ast.$ Furthermore, the kernel of $\phi$ is exactly the
same as the left kernel of the canonical $\mathcal A$-pairing
$(\mathcal{E}^\ast, \mathcal{E}; \mathcal{A})$. Indeed, $\ker \phi=
0 = \mathcal{E}^{\ast \top}.$

\begin{theorem}\label{theo4}
Let $\mathcal E$ be a free $\mathcal A$-module of finite dimension.
The canonical pairing $((\mathcal{E}^\ast, \mathcal{E}; \nu);
\mathcal{A})$ is orthogonally convenient.
\end{theorem}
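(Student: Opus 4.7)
The plan is to unpack the definition of orthogonally convenient $\mathcal A$-pairing into two symmetric halves and dispatch each using machinery already developed in the excerpt. Specifically, I must show (a) for every free sub-$\mathcal A$-module $\mathcal{E}_0\subseteq \mathcal E$, the right-orthogonal $\mathcal{E}_0^{\perp_\nu}\subseteq \mathcal{E}^\ast$ is free, and (b) for every free sub-$\mathcal A$-module $\mathcal{F}_0\subseteq \mathcal{E}^\ast$, the left-orthogonal $\mathcal{F}_0^{\top^\nu}\subseteq \mathcal E$ is free.

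For (a), I would argue exactly as in the proofs of Theorem~\ref{theorem6} and Corollary~\ref{corollary3}: since $\mathcal E$ is free of finite dimension and $\mathcal{E}_0$ is a free sub-$\mathcal A$-module, choose a free sub-$\mathcal A$-module $\mathcal{E}'_0$ complementing $\mathcal{E}_0$, so that $\mathcal E = \mathcal{E}_0\oplus\mathcal{E}'_0$. Proposition~\ref{proposition2} then delivers the direct-sum decomposition
\[
\mathcal{E}^\ast \;=\; \mathcal{E}_0^{\perp_\nu}\oplus(\mathcal{E}'_0)^{\perp_\nu},
\]
and Corollary~\ref{corollary3} (applied to $\mathcal{E}_1=\mathcal{E}'_0$) identifies the quotient $\mathcal{E}^\ast/(\mathcal{E}'_0)^{\perp_\nu}$ with the dual $(\mathcal{E}'_0)^\ast$. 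Combining these, $\mathcal{E}_0^{\perp_\nu}\simeq (\mathcal{E}'_0)^\ast$, and since $\mathcal{E}'_0$ is free, so is its dual; hence $\mathcal{E}_0^{\perp_\nu}$ is free, as desired.

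For (b), my strategy is to reduce to (a) via biduality. Because $\mathcal E$ is free of finite dimension, the canonical $\mathcal A$-morphism $\mathcal E\longrightarrow\mathcal{E}^{\ast\ast}$, $s\mapsto(\psi\mapsto\psi(s))$, is an $\mathcal A$-isomorphism (cf.\ \cite[p.~298, (5.2)]{mallios} used already in Theorem~\ref{theorem6}), and it carries the canonical pairing $(\mathcal{E}^\ast,\mathcal{E};\nu)$ onto the canonical pairing $(\mathcal{E}^{\ast\ast},\mathcal{E}^\ast;\nu')$ determined by the free $\mathcal A$-module $\mathcal{E}^\ast$. Under this identification the left-orthogonal $\mathcal{F}_0^{\top^\nu}\subseteq\mathcal E$ corresponds precisely to the right-orthogonal $\mathcal{F}_0^{\perp_{\nu'}}\subseteq\mathcal{E}^{\ast\ast}$. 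Since $\mathcal{E}^\ast$ is itself a free $\mathcal A$-module of finite dimension and $\mathcal{F}_0\subseteq\mathcal{E}^\ast$ is a free sub-$\mathcal A$-module, part (a) applied to the pairing $(\mathcal{E}^{\ast\ast},\mathcal{E}^\ast;\nu')$ shows that $\mathcal{F}_0^{\perp_{\nu'}}$ is free; pulling back along the biduality isomorphism yields $\mathcal{F}_0^{\top^\nu}$ free in $\mathcal E$.

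The only delicate point that I expect to need a careful statement is the free-complement hypothesis: I am tacitly using that any free sub-$\mathcal A$-module of a free $\mathcal A$-module of finite dimension admits a free complement. The paper already invokes this in the proof of Theorem~\ref{theorem6}, so I will cite that usage rather than reprove it. The biduality step in (b) is formally the second obstacle, but it is precisely the same identification used in the proof of Theorem~\ref{theorem6}~(2)--(3), so no new argument is required beyond checking that the canonical pairings $\nu$ and $\nu'$ correspond under $\mathcal E\simeq\mathcal{E}^{\ast\ast}$, which is immediate from the defining formula $\nu_U(s,\psi)=\psi_U(s)$.
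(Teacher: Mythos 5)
Your proof is correct but follows a genuinely different route from the paper's. For the freeness of $\mathcal{E}_0^{\perp_\nu}$, the paper revisits the second map of Theorem~\ref{theorem3}, namely $\mathcal{E}_0^\perp\to(\mathcal{E}/\mathcal{E}_0)^\ast$, and devotes the bulk of its proof to constructing explicit preimages in order to establish ontoness. You instead combine Proposition~\ref{proposition2} (the splitting $\mathcal{E}^\ast=\mathcal{E}_0^\perp\oplus(\mathcal{E}'_0)^\perp$) with Corollary~\ref{corollary3} (the quotient identification $\mathcal{E}^\ast/(\mathcal{E}'_0)^\perp\simeq(\mathcal{E}'_0)^\ast$) to reach $\mathcal{E}_0^{\perp_\nu}\simeq(\mathcal{E}'_0)^\ast$ with no fresh analysis; Theorem~\ref{theorem4}, which already records the isomorphism $(\mathcal{E}/\mathcal{E}_1)^\ast\simeq\mathcal{E}_1^\perp$, would have done this even more directly, but your version is equally economical and avoids redoing the surjectivity computation. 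For the freeness of $\mathcal{F}_0^{\top^\nu}$ the paper cites $\mathcal{E}^\ast\cong\mathcal E$ and asserts the result follows from the first half; this is a little loose, since that basis isomorphism is not canonical and converting a left-orthogonal in $\mathcal E$ into a right-orthogonal in the dual under it requires a small coordinate check. Your biduality argument --- identifying $\mathcal E$ with $\mathcal{E}^{\ast\ast}$ via the canonical evaluation map, observing that this intertwines $(\mathcal{E}^\ast,\mathcal{E};\nu)$ with $(\mathcal{E}^{\ast\ast},\mathcal{E}^\ast;\nu')$ and carries $\mathcal{F}_0^{\top^\nu}$ onto $\mathcal{F}_0^{\perp_{\nu'}}$ --- performs the same reduction canonically and unambiguously, which is a genuine improvement in clarity. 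You also correctly flag, as the paper itself tacitly assumes throughout (e.g.\ in Theorem~\ref{theorem6}), the reliance on free complements of free sub-$\mathcal A$-modules.
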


\begin{proof}
First, we notice by Theorem \ref{theorem2} that both kernels, i.e.
$(\mathcal{E}^\ast)^\top$ and $\mathcal{E}^\perp$, are $0$. Let
$\mathcal{E}_0$ be a free sub-$\mathcal A$-module of $\mathcal E$,
and consider the second map of Theorem \ref{theorem3}$:
\mathcal{E}_0^\perp\longrightarrow
(\mathcal{E}/\mathcal{E}_0)^\ast.$ It is an $\mathcal A$-isomorphism
\textit{into}, and we shall show that \textit{it is onto.} Fix an
open set $U$ in $X$, and let $\psi\in
(\mathcal{E}/\mathcal{E}_0)^\ast(U):=
\mbox{H}om_{\mathcal{A}|_U}\left((\mathcal{E}/\mathcal{E}_0)|_U,
\mathcal{A}|_U\right).$ Let us consider a family
$\overline{\psi}\equiv (\overline{\psi}_V)_{U\supseteq V,\ open}$
such that
\begin{equation}\begin{array}{ll}\overline{\psi}_V(r):= \psi_V(r+
\mathcal{E}_0(V)), & r\in \mathcal{E}(V).
\label{eq18}\end{array}\end{equation} It is easy to see that
$\overline{\psi}_V$ is $\mathcal{A}(V)$-linear for any open
$V\subseteq U$. Now, let $\{\rho^V_W\}$, $\{\overline{\rho}^V_W\}$
and $\{\tau^V_W\}$ be the restriction maps for the
(\textit{complete}) \textit{presheaves of sections} of $\mathcal E$,
$\mathcal{E}/\mathcal{E}_0$ and $\mathcal A$, respectively. The
restriction maps $\overline{\rho}^V_W$ are defined by setting
\[\begin{array}{ll}
\overline{\rho}^V_W(r+ \mathcal{E}_0(V)):= \rho^V_W(r)+
\mathcal{E}_0(W), & r\in \mathcal{E}(V)\end{array}.
\]
It clearly follows that
\begin{eqnarray*}
(\tau^V_W\circ \overline{\psi}_V)(r) & = & \tau^V_W(\psi_V(r+
\mathcal{E}_0(V)))\\ & = & \psi_W(\rho^V_W(r)+ \mathcal{E}_0(W))\\ &
= & \overline{\psi}_W(\rho^V_W(r))\\ & = & (\overline{\psi}_W\circ
\rho^V_W)(r),
\end{eqnarray*}
from which we deduce that
\[
\tau^V_W\circ \overline{\psi}_V= \overline{\psi}_W\circ \rho^V_W,
\]
which implies that
\[
\overline{\psi}\in \mbox{H}om_{\mathcal{A}|_U}(\mathcal{E}|_U,
\mathcal{A}|_U)=: \mathcal{E}^\ast(U).
\] Suppose $r\in
\mathcal{E}_0(V)$, where $V$ is open in $U$. Then
\[\overline{\psi}_V(r)= \psi_V(r+ \mathcal{E}_0(V))=
\psi_V(\mathcal{E}_0(V))= 0,\]therefore
\[\nu_V(\overline{\psi}|_V, \mathcal{E}_0(V))=
\overline{\psi}_V(\mathcal{E}_0(V))=0,
\]i.e. $\overline{\psi}\in
\mathcal{E}_0^\perp(U).$ We contend that $\overline{\psi}$ has the
given $\psi$ as image under the second map of Theorem
\ref{theorem3}, and this will show the ontoness of the map thereof
and that $\mathcal{E}_0^\perp$ is a free sub-$\mathcal A$-module of
$\mathcal{E}^\ast$.

Let us find the image of $\overline{\psi}$. Consider the pairing
$((\mathcal{E}/\mathcal{E}_0, \mathcal{E}_0^\perp; \Theta);
\mathcal{A})$ such that for any open $V\subseteq X$, we have
\[\Theta_V(\alpha, r+ \mathcal{E}_0(V)):= \nu_V(\alpha, r)=
\alpha_V(r),\]where $\alpha\in \mathcal{E}_0^\perp(V)\subseteq
\mathcal{E}^\ast(V),$ $r\in \mathcal{E}(V)$. Clearly, the left
kernel of this new pairing is $0$. For $\alpha= \overline{\psi}\in
\mathcal{E}_0^\perp(U)\subseteq \mathcal{E}^\ast(U),$ we have
\[\Theta_U(\overline{\psi}, r+ \mathcal{E}_0(U))=
\overline{\psi}_U(r)\]where $r\in \mathcal{E}(U)$, and the map
\[\overline{\Theta}_U: \mathcal{E}_0^\perp(U)\longrightarrow
(\mathcal{E}/\mathcal{E}_0)^\ast(U)\]given by
\[\overline{\psi}\longmapsto \overline{\Theta}_{U,
\overline{\psi}}\equiv \left((\overline{\Theta}_{U,
\overline{\psi}})_V\right)_{U\supseteq V,\ open}\] and such that for
any $r\in \mathcal{E}(V)$ \[(\overline{\Theta}_{U,
\overline{\psi}})_V(r+ \mathcal{E}_0(V)):=
{\Theta}_V(\overline{\psi}|_V, r+\mathcal{E}_0(V))=
\overline{\psi}_V(r)= \psi_V(r+ \mathcal{E}_0(V))\]is the image.
Thus the image of $\overline{\psi}$ is $\psi$, hence the map
$\mathcal{E}_0^\perp(U)\longrightarrow
(\mathcal{E}/\mathcal{E}_0)^\ast(U)$ is onto, and therefore an
$\mathcal{A}(U)$-isomorphism. Since $\mathcal{E}/\mathcal{E}_0$ is
free by Corollary \ref{corollary1}, so are
$(\mathcal{E}/\mathcal{E}_0)^\ast$ and $\mathcal{E}_0^\perp$ free.

Now, let $\mathcal{F}_0$ be a free sub-$\mathcal A$-module of
$\mathcal{E}^\ast\cong \mathcal E$ (cf. Mallios~\cite[p.298,
(5.2)]{mallios}); on considering $\mathcal{F}_0$ as a free
sub-$\mathcal A$-module of $\mathcal E$, according to all that
precedes above $\mathcal{F}_0^\perp$ is free in
$\mathcal{E}^\ast\cong \mathcal E$, and so the proof is finished.
\end{proof}

\begin{definition}
\emph{Let $\mathcal E$ and $\mathcal F$ be free $\mathcal
A$-modules. An $\mathcal A$-morphism $\phi\in \emph{H}om_{\mathcal
A}(\mathcal{E}, \mathcal{F})$ is called \textbf{free} if  Im $\phi$
is a free sub-$\mathcal A$-module of $\mathcal F$. The dimension of
Im $\phi$ is called the \textbf{rank} of $\phi$, and is denoted
\textbf{rank} $\phi.$}
\end{definition}

We may now state the counterpart of the \textit{fundamental theorem}
of the classical theory, see \cite[p. 54, Th\'eor\`{e}me
6.4]{chambadal}.

\begin{theorem}\label{theorem5}
Let $\phi\in \emph{H}om_\mathcal{A}(\mathcal{E}, \mathcal{F})$ be a
free $\mathcal A$-morphism mapping a free $\mathcal A$-module
$\mathcal E$ into a free $\mathcal A$-module $\mathcal F$. Then, the
rank of $\phi$ is finite if and only if the kernel of $\phi$ has
finite codimension in $\mathcal E$. Moreover, one has
\[
\emph{\mbox{rank}} \phi:= \dim \emph{\mbox{Im}} \phi=
\emph{\mbox{codim}}_\mathcal{E}\ker \phi.
\]
\end{theorem}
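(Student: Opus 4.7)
The plan is to reduce Theorem \ref{theorem5} to the first isomorphism theorem for $\mathcal{A}$-modules, which is exactly Corollary \ref{corollary1}$(1)$. That corollary tells us that $\mathcal{E}/\ker \phi \simeq \mathrm{Im}\,\phi$ within an $\mathcal{A}$-isomorphism, so essentially all the content of Theorem \ref{theorem5} is built into the hypotheses plus this identification, and the proof should amount to bookkeeping with the definitions of rank and codimension.

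First I would note that the hypothesis ``$\phi$ is free'' gives us at once that $\mathrm{Im}\,\phi$ is a free sub-$\mathcal{A}$-module of $\mathcal{F}$, so $\dim \mathrm{Im}\,\phi$ is well defined and equals, by definition, $\mathrm{rank}\,\phi$. Next I would invoke Corollary \ref{corollary1}$(1)$ to obtain an $\mathcal{A}$-isomorphism
\[
\mathcal{E}/\ker \phi \simeq \mathrm{Im}\,\phi .
\]
Since $\mathrm{Im}\,\phi$ is free, the quotient $\mathcal{E}/\ker \phi$ is automatically a free $\mathcal{A}$-module, and $\mathcal{A}$-isomorphic free $\mathcal{A}$-modules share the same dimension. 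Therefore
\[
\mathrm{codim}_{\mathcal{E}} \ker \phi := \dim (\mathcal{E}/\ker \phi) = \dim \mathrm{Im}\,\phi =: \mathrm{rank}\,\phi ,
\]
whenever either of these quantities is finite.

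For the ``finite if and only if finite'' part, I would argue by the same $\mathcal{A}$-isomorphism: $\mathrm{rank}\,\phi$ is finite exactly when $\mathrm{Im}\,\phi$ has finite dimension, which by the above isomorphism is equivalent to $\mathcal{E}/\ker\phi$ having finite dimension, i.e.\ to $\ker\phi$ having finite codimension in $\mathcal{E}$. The equality $\mathrm{rank}\,\phi = \mathrm{codim}_{\mathcal{E}} \ker \phi$ then holds in the finite case and both sides are infinite simultaneously in the other.

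I do not anticipate a genuine obstacle here; the only point that needs a sentence of care is the verification that $\mathcal{E}/\ker\phi$ inherits freeness from $\mathrm{Im}\,\phi$ via the $\mathcal{A}$-isomorphism (so that dimension on the quotient side is meaningful), which is immediate once the isomorphism from Corollary \ref{corollary1}$(1)$ is in hand. Thus the proof reduces, as in the classical setting of \cite[p.~54]{chambadal}, to invoking the first isomorphism theorem and reading off the definitions of rank and codimension.
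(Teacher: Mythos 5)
Your proposal is correct and follows essentially the same route as the paper: the paper's proof is exactly the one-line invocation of Corollary \ref{corollary1}(1) to identify $\mathcal{E}/\ker\phi$ with $\mathrm{Im}\,\phi$, after which the rank/codimension equality and the finiteness biconditional are read off from the definitions. Your additional remarks about freeness of $\mathrm{Im}\,\phi$ and of the quotient are just the bookkeeping the paper leaves implicit.
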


\begin{proof}
Corollary \ref{corollary1}(1) shows that the quotient free $\mathcal
A$-module $\mathcal{E}/\ker \phi$ is $\mathcal A$-isomorphic to Im
$\phi$.
\end{proof}

\begin{corollary}\label{corollary4}
Let $\mathcal A$ be a PID algebra sheaf and $\mathcal E$, $\mathcal
F$ free $\mathcal A$-modules. Then, if $\dim \mathcal E$ is finite,
every free $\mathcal A$-morphism $\phi\in
\emph{H}om_\mathcal{A}(\mathcal{E}, \mathcal{F})$ has finite rank,
and
\begin{equation}\label{eq8}
\emph{\mbox{rank}}(\phi)+ \dim \ker(\phi)= \dim \mathcal{E}.
\end{equation}
The formula above is called the \textbf{dimension formula.}
\end{corollary}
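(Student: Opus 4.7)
The plan is to reduce the statement to Theorem \ref{theorem5} together with the fact that, under the PID hypothesis, the kernel $\ker\phi$ has a well-defined dimension and admits a complement in $\mathcal{E}$. First, by Corollary \ref{corollary1}$(1)$, $\mathcal{E}/\ker\phi$ is $\mathcal{A}$-isomorphic to $\mathrm{Im}\,\phi$; by the freeness hypothesis on $\phi$, $\mathrm{Im}\,\phi$ is a free sub-$\mathcal{A}$-module of $\mathcal{F}$, so $\mathcal{E}/\ker\phi$ is itself free. Applying Theorem \ref{theorem5} immediately gives $\mathrm{rank}(\phi)=\dim\mathrm{Im}\,\phi=\dim(\mathcal{E}/\ker\phi)=\mathrm{codim}_{\mathcal{E}}\ker\phi$, so the work reduces to producing a genuine direct-sum decomposition that converts this codimension statement into the additive dimension identity.

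Next, I would exhibit a free complement to $\ker\phi$. Since $\mathcal{E}/\ker\phi\simeq \mathrm{Im}\,\phi$ is free of finite rank, pick a global basis of $\mathcal{E}/\ker\phi$; because the canonical projection $\mathcal{E}\longrightarrow \mathcal{E}/\ker\phi$ is surjective and $\mathcal{E}$ is free, each basis element can be lifted to $\mathcal{E}$, producing an $\mathcal{A}$-morphism $\mathcal{E}/\ker\phi\longrightarrow \mathcal{E}$ that is a right inverse to the projection. This yields a splitting
\[
\mathcal{E}=\ker\phi\oplus\mathcal{E}_1,\qquad \mathcal{E}_1\simeq \mathcal{E}/\ker\phi\simeq \mathrm{Im}\,\phi,
\]
with $\mathcal{E}_1$ a free sub-$\mathcal{A}$-module of $\mathcal{E}$ of dimension $\mathrm{rank}(\phi)$. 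The PID hypothesis on $\mathcal{A}$ now ensures that $\ker\phi$, being a sub-$\mathcal{A}$-module of the free $\mathcal{A}$-module $\mathcal{E}$, is (section-wise) free, so that $\dim\ker\phi$ is meaningful and, by the finiteness of $\dim\mathcal{E}$, finite. Taking dimensions in the direct-sum decomposition yields
\[
\dim\mathcal{E}=\dim\ker\phi+\dim\mathcal{E}_1=\dim\ker\phi+\mathrm{rank}(\phi),
\]
which is \eqref{eq8}.

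The main obstacle is the justification that $\dim\ker\phi$ is a well-defined (single) integer and that $\ker\phi$ really does admit a direct complement inside $\mathcal{E}$; both rely essentially on the PID hypothesis, which guarantees that sub-$\mathcal{A}$-modules of free $\mathcal{A}$-modules are section-wise free, and on the freeness of $\mathrm{Im}\,\phi$, which is what lets the projection $\mathcal{E}\longrightarrow \mathcal{E}/\ker\phi$ be split by a lift. Without the PID assumption, neither the freeness of $\ker\phi$ nor the meaningfulness of the formula would be available, which is precisely why this dimension formula is stated as a corollary requiring that extra structural hypothesis rather than being subsumed by Theorem \ref{theorem5} itself.
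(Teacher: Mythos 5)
Your proof is correct but follows a genuinely different route from the paper's. The paper argues entirely section-wise: for each open $U$, the PID hypothesis makes $\ker(\phi_U)$ a free sub-$\mathcal{A}(U)$-module of $\mathcal{E}(U)$, and a cited classical result for modules over a PID (the rank--nullity theorem for free modules) yields $\dim\ker(\phi_U)+\dim\mathrm{Im}(\phi_U)=\dim\mathcal{E}(U)$ directly; the constancy of $\dim\ker(\phi_U)$ across opens (forced by the constancy of $\dim\mathcal{E}(U)$ and of $\dim\mathrm{Im}(\phi_U)$, the latter because $\mathrm{Im}\,\phi$ is a free $\mathcal{A}$-module by hypothesis) then lets one read off the identity at the sheaf level. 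You instead pass through Corollary~\ref{corollary1}$(1)$ and Theorem~\ref{theorem5} to get $\mathrm{rank}(\phi)=\mathrm{codim}_{\mathcal E}\ker\phi$, and then build a sheaf-level splitting $\mathcal{E}=\ker\phi\oplus\mathcal{E}_1$ by lifting a frame of the free quotient $\mathcal{E}/\ker\phi$. This buys you an explicit direct-sum decomposition that the paper's argument leaves implicit, at the (modest) cost of needing the canonical projection $\mathcal{E}\to\mathcal{E}/\ker\phi$ to be surjective on sections so the lift exists as an $\mathcal{A}$-morphism --- a fact that does hold in this framework, since the paper takes $(\mathcal{E}/\ker\phi)(U)=\mathcal{E}(U)/\ker\phi(U)$ for free quotients of this kind, but which you should state rather than leave to ``each basis element can be lifted.'' Once that is said, both proofs are sound and rely on the same essential input: over a PID algebra sheaf, sub-$\mathcal{A}$-modules of free $\mathcal{A}$-modules are section-wise free, and free $\mathcal{A}$-submodules of a finite-rank free $\mathcal{A}$-module have well-defined, constant rank.
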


\begin{proof}
Indeed, given that every $\mathcal{A}(U),$ where $U$ is open in $X$,
is a PID algebra, it follows that $\ker (\phi_U)$ is a free
sub-$\mathcal{A}(U)$- module of the free $\mathcal{A}(U)$-module
$\mathcal{E}(U)$. By elementary module theory (see, for instance,
\cite[p. 173, Proposition 8.8]{adkins} or \cite[p. 105, Corollary
2]{blyth}), we have
\[
\dim \ker(\phi_U)+ \dim {\mbox{Im}}(\phi_U)= \dim \mathcal{E}(U).
\]
Since for any subsets $U$ and $V$ of $X$, $\dim \ker(\phi_U)= \dim
\ker(\phi_V)$, it follows that $\ker(\phi)$ is a free sub-$\mathcal
A$-module of $\mathcal E$, and therefore
\[
\dim \ker(\phi)+ \dim {\mbox{Im}}(\phi)= \dim \mathcal{E},
\]
or
\[
\dim \ker(\phi)+ {\mbox{rank}}(\phi)= \dim \mathcal{E}.
\]
\end{proof}

\begin{theorem}\label{theorem7}
Let $(\mathcal{E}^\ast, \mathcal{E}; \mathcal{A})$ be the canonical
free $\mathcal A$-pairing, and $\mathcal F$ a free sub-$\mathcal
A$-module of $\mathcal{E}^\ast$. $\mathcal F$ has finite dimension
if and only if $\mathcal{F}^\top$ has finite codimension in
$\mathcal E$; moreover, one has
\[\begin{array}{ll}
\dim \mathcal{F}= \emph{\mbox{codim}}_\mathcal{E}\mathcal{F}^\top; &
(\mathcal{F}^\top)^\perp= \mathcal F. \end{array}
\]
\end{theorem}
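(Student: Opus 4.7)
The plan is to transpose the proof of Theorem \ref{theorem6} across the canonical pairing, exploiting the bilateral nondegeneracy recorded in Theorem \ref{theorem2}. First I would establish the dual-side analog of Corollary \ref{corollary3}: for a free sub-$\mathcal A$-module $\mathcal F\subseteq \mathcal{E}^\ast$, the evaluation $\mathcal A$-morphism $e\in \mbox{H}om_{\mathcal A}(\mathcal{E}, \mathcal{F}^\ast)$ sending a section $s$ to the restriction $(\phi\mapsto \phi(s))$ has kernel exactly $\mathcal{F}^\top$ and is surjective. Kernel identification is immediate from the definitions of $\top$ and $\perp$. For surjectivity I would imitate the preimage construction used in the proof of Theorem \ref{theo4}: given $\psi\in \mathcal{F}^\ast(U)$, choose a free complement $\mathcal G$ of $\mathcal F$ in $\mathcal{E}^\ast$, extend $\psi$ to $\bar\psi\in \mathcal{E}^{\ast\ast}(U)$ by declaring it to vanish on $\mathcal G$, verify presheaf naturality along the lines of equations (\ref{eq1})--(\ref{eq2}), and pull $\bar\psi$ back along the canonical embedding $\mathcal{E}\to \mathcal{E}^{\ast\ast}$, which is well-defined and injective by Theorem \ref{theorem2}.

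Granting this first step, Corollary \ref{corollary1}~(1) applied to $e$ furnishes an $\mathcal A$-isomorphism $\mathcal{E}/\mathcal{F}^\top\simeq \mathcal{F}^\ast$. Since $\mathcal F$ is free, one has $\mathcal{F}^\ast\simeq \mathcal F$ (cf.\ \cite[p.\ 298, (5.2)]{mallios}), hence $\mathcal{E}/\mathcal{F}^\top\simeq \mathcal F$ within an $\mathcal A$-isomorphism. From this single isomorphism both the equivalence ``$\mathcal F$ finite-dimensional if and only if $\mathcal{F}^\top$ has finite codimension in $\mathcal E$'' and the numerical equality $\dim \mathcal F= \mbox{codim}_{\mathcal E}\mathcal{F}^\top$ are immediate.

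For the biorthogonality $(\mathcal{F}^\top)^\perp=\mathcal F$, the inclusion $\mathcal F\subseteq (\mathcal{F}^\top)^\perp$ is formal. For the reverse inclusion I would run the same decomposition argument as in the proof of Theorem \ref{theorem6}~(1): take a free complement $\mathcal G$ of $\mathcal F$ in $\mathcal{E}^\ast$, decompose $\alpha\in (\mathcal{F}^\top)^\perp(U)$ as $\alpha=\phi+\gamma$ with $\phi\in \mathcal{F}(U)$ and $\gamma\in \mathcal{G}(U)$, and apply the first step to the free sub-$\mathcal A$-module $\mathcal G$ to show that $\gamma$ annihilates every section of $\mathcal{E}(U)$; Theorem \ref{theorem2} then forces $\gamma=0$, so $\alpha=\phi\in \mathcal{F}(U)$.

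The main obstacle is the surjectivity in the first step: manufacturing, for an arbitrary free sub-$\mathcal A$-module $\mathcal F$ of $\mathcal{E}^\ast$, a family of sections in $\mathcal E$ dual to a chosen local basis of $\mathcal F$. This step leans essentially on the freeness of both $\mathcal E$ and $\mathcal F$, on the existence of a free complement of $\mathcal F$ in $\mathcal{E}^\ast$ (implicitly used already in Theorem \ref{theorem6}~(1)), and on the standing hypothesis that every nowhere-zero section of $\mathcal A$ is invertible, just as in the proof of Theorem \ref{theo4}. Once that technical point is settled, the remainder of the argument is purely formal.
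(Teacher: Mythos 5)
Your first step is in substance the paper's own construction, only packaged differently. The paper fixes a local gauge $(e_1^{U\ast},\ldots,e_n^{U\ast})$ of $\mathcal F$ and forms $\phi:\mathcal{E}\to\mathcal{A}^n$, $\phi_U(s)=(e_1^{U\ast}(s),\ldots,e_n^{U\ast}(s))$, with kernel $\mathcal{F}^\top$; identifying $\mathcal{A}^n$ with $\mathcal{F}^\ast$ via the dual gauge, that $\phi$ is precisely your evaluation morphism $e$. Where the paper simply asserts that $\mathrm{Im}\,\phi\simeq\mathcal{A}^n$ and invokes Theorem~\ref{theorem5}, you supply an explicit surjectivity argument (extend $\psi$ by zero across a complement, pull back through $\mathcal{E}\simeq\mathcal{E}^{\ast\ast}$, which requires the standing assumption that $\mathcal E$ is free of finite rank), and then apply Corollary~\ref{corollary1}~(1). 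Both routes legitimately produce $\dim\mathcal{F}=\mathrm{codim}_\mathcal{E}\mathcal{F}^\top$ and the finiteness equivalence, so up to here the comparison is: same morphism, more detail about surjectivity on your side.

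The biorthogonality argument, however, has a genuine gap. After writing $\alpha=\phi+\gamma$ with $\phi\in\mathcal{F}(U)$, $\gamma\in\mathcal{G}(U)$, what you actually know is that $\gamma$ annihilates $\mathcal{F}^\top(U)$ (since $\alpha$ does by hypothesis and $\phi$ does by definition of $\top$), and that $\gamma$ annihilates $\mathcal{G}^\top(U)$ (automatic from $\gamma\in\mathcal{G}(U)$). To conclude via Theorem~\ref{theorem2} that $\gamma=0$, you need $\gamma$ to annihilate \emph{all} of $\mathcal{E}(U)$, and for that you need the decomposition $\mathcal{E}=\mathcal{F}^\top\oplus\mathcal{G}^\top$, the dual-side analog of Proposition~\ref{proposition2}. ``Applying the first step to $\mathcal{G}$'' only yields $\mathcal{E}/\mathcal{G}^\top\simeq\mathcal{G}^\ast$; it does not produce that sum decomposition, and you never state it. The decomposition is obtainable (e.g.\ from $\mathcal{F}^\top\cap\mathcal{G}^\top=(\mathcal{E}^\ast)^\top=0$ together with the codimension counts $\mathrm{codim}\,\mathcal{F}^\top=\dim\mathcal{F}$, $\mathrm{codim}\,\mathcal{G}^\top=\dim\mathcal{G}$), but it is a missing lemma. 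The paper sidesteps this entirely: having $\mathrm{codim}_\mathcal{E}\mathcal{F}^\top=\dim\mathcal{F}$ in hand, it applies Theorem~\ref{theorem6}~(3) to $\mathcal{E}_1=\mathcal{F}^\top$ to get $\dim(\mathcal{F}^\top)^\perp=\mathrm{codim}_\mathcal{E}\mathcal{F}^\top=\dim\mathcal{F}$, and then the formal inclusion $\mathcal{F}\subseteq(\mathcal{F}^\top)^\perp$ together with equality of (finite) dimensions forces $(\mathcal{F}^\top)^\perp=\mathcal{F}$. That dimension-count route is shorter and uses only already-proved results; you should either adopt it or explicitly prove $\mathcal{E}=\mathcal{F}^\top\oplus\mathcal{G}^\top$ before invoking Theorem~\ref{theorem2}.
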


\begin{proof}
The case $\mathcal{F}=0$ is trivial.

Suppose that $\mathcal F$ has finite dimension; let $U$ be an open
subset of $X$, $(e_1^{U\ast}, \ldots, e_n^{U\ast})$ a canonical
(local) gauge of $\mathcal F$ (cf. \cite[p. 291, (3.11) along with
p. 301, (5.17) and (5.18)]{mallios}), and $\phi\in
\emph{H}om_\mathcal{A}(\mathcal{E}, \mathcal{A}^n)$ be such that if
$s\in \mathcal{E}(U),$
\[
\phi_U(s):= (e_1^{U\ast}(s), \ldots, e_n^{U\ast}(s)).
\]
It is clear that $\phi$ is indeed an $\mathcal A$-morphism of
$\mathcal E$ into $\mathcal{A}^n$ whose kernel is
$\mathcal{F}^\top$, which is a free sub-$\mathcal A$-module of
$\mathcal E$ for the simple reason that \textit{canonical free
$\mathcal A$-pairings are orthogonally convenient,} see Theorem
\ref{theo4}. It is also clear that Im $\phi$ is $\mathcal
A$-isomorphic to the free $\mathcal A$-module $\mathcal{A}^n$; thus,
by Theorem \ref{theorem5}, one has
\begin{equation}\label{eq6}
{\mbox{rank}}(\phi):= {\mbox{codim}}_\mathcal{E}\mathcal{F}^\top=
\dim \mathcal{F}.
\end{equation}
According to Theorem \ref{theorem6}(3), $(\mathcal{F}^\top)^\perp$
has finite dimension, and
\begin{equation}\label{eq7}
\dim (\mathcal{F}^\top)^\perp=
\mbox{codim}_\mathcal{E}\mathcal{F}^\top.
\end{equation}
Since $\mathcal F$ is contained in $(\mathcal{F}^\top)^\perp$, we
deduce from (\ref{eq6}) and (\ref{eq7}) that
\[
\mathcal{F}= (\mathcal{F}^\top)^\perp.
\]

Conversely, suppose that $\mathcal{F}^\top$ has finite codimension
in $\mathcal E$; then $(\mathcal{F}^\top)^\perp$ has finite
dimension, and thus $\mathcal F$ as well, as $\mathcal F$ is
contained in $(\mathcal{F}^\top)^\perp.$
\end{proof}

\section{Biorthogonality with respect to arbitrary $\mathcal A$-bilinear forms}

In this section, we investigate the results of the previous sections
in a more general setting, that is, $\mathcal A$-pairings defined by
arbitrary $\mathcal A$-bilinear morphisms. The section ends with the
Witt's hyperbolic decomposition theorem for $\mathcal A$-modules.

\begin{definition}\label{definition1}
\emph{Let $((\mathcal{F}, \mathcal{E}; \phi); \mathcal{A})$ be an
{$\mathcal A$-pairing}. The $\mathcal A$-bilinear morphism $\phi$ is
said to be \textbf{non-degenerate} if $\mathcal{E}^{\perp_\phi}=
\mathcal{F}^{\top^\phi}=0,$ and degenerate otherwise. For every open
$U\subseteq X$,
\[
\mathcal{E}^{\perp_\phi}(U):= \{t\in \mathcal{F}(U):\
\phi_V(\mathcal{E}(V), t|_V)=0,\ \mbox{for any open $V\subseteq
U$}\}.
\]
Similarly,
\[
\mathcal{F}^{\top^\phi}(U):= \{s\in \mathcal{E}(U): \phi_V(s|_V,
\mathcal{F}(V))=0,\ \mbox{for any open $V\subseteq U$}\}.
\]
}
\end{definition}

\begin{definition}
\emph{Let $\mathcal E$ and $\mathcal F$ be $\mathcal A$-modules
$($not necessarily free$)$ and $\phi: \mathcal{E}\oplus
\mathcal{F}\longrightarrow \mathcal{A}$ and $\mathcal A$-bilinear
morphism. The $\mathcal A$-morphism
\[
\phi^R\in \emph{H}om_\mathcal{A}(\mathcal{F},
\emph{H}om_\mathcal{A}(\mathcal{E}, \mathcal{A}))
\]
such that, for any open subset $U\subseteq X$ and sections $t\in
\mathcal{F}(U)$ and $s\in \mathcal{E}(V),$ where $V\subseteq U$ is
open,
\[
\phi^R_U(t)(s)\equiv (\phi^R)_U(t)(s):= \phi_V(s, t|_V)
\]
is called the \textbf{right insertion $\mathcal A$-morphism}
associated with $\phi.$ Similarly, for every open subset $U\subseteq
X$ and sections $s\in \mathcal{E}(U)$ and $t\in \mathcal{F}(V),$
where $V$ is open in $U,$
\[
\phi^L_U(s)(t)\equiv (\phi^L)_U(s)(t):= \phi_V(s|_V, t)
\]
defines an $\mathcal A$-morphism, denoted $\phi^L$, of $\mathcal E$
into $\mathcal{F}^\ast$, i.e.,
\[
\phi^L\in \emph{H}om_\mathcal{A}(\mathcal{E},
\emph{H}om_\mathcal{A}(\mathcal{F}, \mathcal{A})).
\]
The $\mathcal A$-morphism $\phi^L$ is called the \textbf{left
insertion $\mathcal A$-morphism} associated with $\phi.$}
\end{definition}

It is clear in the light of Definition \ref{definition1} that if the
$\mathcal A$-bilinear morphism $\phi: \mathcal{E}\oplus
\mathcal{F}\longrightarrow \mathcal A$ is non-degenerate, then both
insertion $\mathcal A$-morphisms $\phi^R$ and $\phi^L$ are
injective. Moreover, if $\mathcal E$ and $\mathcal F$ are free
$\mathcal A$-modules of finite dimension, then
\[
\mathcal{E}= \mathcal{F}
\]
within an $\mathcal A$-isomorphism.

In other to differentiate a canonical $\mathcal A$-pairing
$(\mathcal{E}^\ast, \mathcal{E}; \nu)$ from an arbitrary $\mathcal
A$-pairing $(\mathcal{F}, \mathcal{E}; \phi)$, in which $\mathcal F$
may still be the dual $\mathcal A$-module $\mathcal{E}^\ast$, we
will adopt the following notation for the orthogonal sub-$\mathcal
A$-module associated with a given sub-$\mathcal A$-module:

\begin{definition}
\emph{Let $(\mathcal{F}, \mathcal{E}; \phi)$ be an $\mathcal A$-pairing
and $\mathcal G$ a sub-$\mathcal A$-module of $\mathcal E$. The
sub-$\mathcal A$-module $\mathcal{G}^{\perp_\phi}\subseteq \mathcal
F$ such that, for every open $U\subseteq X,$
\[
\mathcal{G}^{\perp_\phi}:= \{t\in \mathcal{F}(U):\
\phi_V(\mathcal{G}(V), t|_V)=0, \mbox{for any open $V\subseteq U$}\}
\]
is called the \textbf{$\phi$-orthogonal} (or just
\textbf{orthogonal} if there is no confusion to fear) \textbf{of}
$\mathcal G$ in $\mathcal F$. Similarly, one defines the
$\phi$-orthogonal sub-$\mathcal A$-module $\mathcal{H}^{\top^\phi}$
of a given sub-$\mathcal A$-module $\mathcal H$ of $\mathcal F$, viz
\[
\mathcal{H}^{\top^\phi}:= \{s\in \mathcal{E}(U):\ \phi_V(s|_V,
\mathcal{H}(V))=0, \mbox{for any open $V\subseteq U$}\}.
\]}
\end{definition}

While the notion of orthogonality with respect to arbitrary
$\mathcal A$-bilinear forms generalizes orthogonality in canonical
$\mathcal A$-pairings, the former may relate with the latter through
the following lemma.

\begin{lemma}\label{lemma1}
Let $(\mathcal{F}, \mathcal{E}; \phi)$ be a free $\mathcal
A$-pairing, $\mathcal G$ and $\mathcal H$ free sub-$\mathcal
A$-modules of $\mathcal E$ and $\mathcal F$, respectively. Then,
\begin{equation}\label{eq9}
\mathcal{G}^{\perp_\phi}\simeq (\phi^L(\mathcal{G}))^\top,
\end{equation}
and
\begin{equation}\label{eq10}
\mathcal{H}^{\top^\phi}\simeq (\phi^R(\mathcal{H}))^\top.
\end{equation}
\end{lemma}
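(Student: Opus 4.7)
The plan is to prove both $(4.1)$ and $(4.2)$ as genuine equalities of sub-$\mathcal{A}$-modules (which certainly implies the stated $\mathcal{A}$-isomorphisms) by showing that the two sides have the same sections over every open $U\subseteq X$. Since the two assertions are entirely symmetric in the roles of $\mathcal{E}$ and $\mathcal{F}$ (exchanging $\phi^L$ with $\phi^R$ and swapping the arguments of $\phi$), I will only sketch the argument for $(4.1)$.

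First, I unwind the definitions. A section $t\in\mathcal{G}^{\perp_\phi}(U)\subseteq\mathcal{F}(U)$ is characterised, by Definition of the orthogonal, as one for which $\phi_V(r,t|_V)=0$ for every open $V\subseteq U$ and every $r\in\mathcal{G}(V)$. The defining relation of the left insertion morphism, $\phi^L_V(r)(t|_V)=\phi_V(r,t|_V)$, rewrites this condition as $\phi^L_V(r)(t|_V)=0$ for every such $r$ and $V$. On the other hand, $t\in(\phi^L(\mathcal{G}))^\top(U)\subseteq\mathcal{F}(U)$ means that $\nu_V(t|_V,\psi)=\psi(t|_V)=0$ for every open $V\subseteq U$ and every $\psi\in\phi^L(\mathcal{G})(V)$, where the orthogonal is taken in the canonical $\mathcal{A}$-pairing $(\mathcal{F}^\ast,\mathcal{F};\nu)$.

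The inclusion $(\phi^L(\mathcal{G}))^\top\subseteq\mathcal{G}^{\perp_\phi}$ is then immediate: for each $V\subseteq U$ open and each $r\in\mathcal{G}(V)$, the section $\phi^L_V(r)$ belongs to the image $\phi^L(\mathcal{G})(V)$, so testing against it forces $\phi_V(r,t|_V)=\phi^L_V(r)(t|_V)=0$. For the reverse inclusion, the only subtlety is that a section $\psi\in\phi^L(\mathcal{G})(V)$ of the \emph{image subsheaf} need not be of the form $\phi^L_V(r)$ for a single global $r\in\mathcal{G}(V)$; rather, by the definition of the image subsheaf, there is an open covering $\{W_i\}$ of $V$ together with sections $r_i\in\mathcal{G}(W_i)$ such that $\psi|_{W_i}=\phi^L_{W_i}(r_i)$ for every $i$. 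Given $t\in\mathcal{G}^{\perp_\phi}(U)$, one then computes, for each $i$,
\[
(\psi(t|_V))|_{W_i}=\phi^L_{W_i}(r_i)(t|_{W_i})=\phi_{W_i}(r_i,t|_{W_i})=0,
\]
the last equality by the very definition of $\mathcal{G}^{\perp_\phi}(U)$ applied to the open $W_i\subseteq U$. Since $\mathcal{A}$ is a sheaf, vanishing on the cover $\{W_i\}$ forces $\psi(t|_V)=0$ in $\mathcal{A}(V)$, which gives $t\in(\phi^L(\mathcal{G}))^\top(U)$.

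The two presheaf descriptions obviously commute with the restriction morphisms inherited from $\mathcal{F}$, so the established set-theoretic equality on each open $U$ upgrades to an equality of sub-$\mathcal{A}$-modules, establishing $(4.1)$. The principal obstacle in the argument is precisely this local-to-global step for members of the image sheaf $\phi^L(\mathcal{G})$, since the naive equality $\phi^L(\mathcal{G})(V)=\phi^L_V(\mathcal{G}(V))$ can fail in general; the assumption that $(\mathcal{F},\mathcal{E};\phi)$ is a \emph{free} $\mathcal{A}$-pairing, combined with the sheaf property of $\mathcal{A}$, is what allows us to patch the vanishing over a cover back to a global zero. The proof of $(4.2)$ proceeds verbatim with the formal substitutions indicated above.
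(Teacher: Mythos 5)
Your argument is correct and follows essentially the same route as the paper's: both proofs unwind the two defining conditions and identify them via the identity $\phi^L_V(r)(t|_V)=\phi_V(r,t|_V)$. The only divergence is that you spell out the gluing step needed to pass from vanishing against $\phi^L_U(\mathcal{G}(U))$ to vanishing against all sections of the image subsheaf $\phi^L(\mathcal{G})$ over $U$, a point the paper dispatches without comment by invoking the freeness hypothesis to work sectionwise throughout; your extra care here is sound and, if anything, closes a small gap the paper leaves implicit.
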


\begin{proof}
Let $U$ be an open subset of $X$. Since $\mathcal G$ is free, it is
clear that for a section $t\in \mathcal{F}(U)$ to be in
$\mathcal{G}^{\perp_\phi}$ it is necessary and sufficient that
\[
\phi_U(\mathcal{G}(U), t)=0.
\]
But
\[
(\phi^L_U(\mathcal{G}(U)))^\top= \{t\in \mathcal{F}(U):\
\phi^L_U(\mathcal{G}(U))(t):= \phi_U(\mathcal{G}(U), t)=0\},
\]
therefore (\ref{eq9}) holds as required.

In a similar way, one shows (\ref{eq10}).
\end{proof}

The case where $((\mathcal{F}, \mathcal{E}; \phi); \mathcal{A})$ is
an \textit{orthogonally convenient pairing} and $\phi$ is
\textit{degenerate} is interesting, for it yields the following
result.

\begin{proposition}\label{proposition1}
Given an orthogonally convenient pairing $((\mathcal{F},
\mathcal{E}; \phi); \mathcal{A})$, where $\mathcal E$ and
$\mathcal F$ are free $\mathcal A$-modules of finite dimension,
the free quotient $\mathcal A$-modules
$\mathcal{E}/\mathcal{F}^{\top^\phi}$ and
$\mathcal{F}/\mathcal{E}^{\perp_\phi}$ have the same dimension,
i.e.
\[
\mathcal{E}/\mathcal{F}^{\top^\phi}=
\mathcal{F}/\mathcal{E}^{\perp_\phi}
\]
within an $\mathcal A$-isomorphism.
\end{proposition}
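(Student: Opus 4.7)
The plan is to convert the proposition into a question about the insertion $\mathcal A$-morphisms $\phi^L : \mathcal E \to \mathcal F^\ast$ and $\phi^R : \mathcal F \to \mathcal E^\ast$ introduced just above the statement, so that Corollary \ref{corollary1}$(1)$ and Theorem \ref{theorem7} do the work. The first observation, read straight off Definition \ref{definition1}, is that
\[
\ker \phi^L = \mathcal F^{\top^\phi}, \qquad \ker \phi^R = \mathcal E^{\perp_\phi},
\]
since $\phi^L(s)=0$ amounts to $\phi_V(s|_V,t)=0$ for every section $t$ of $\mathcal F$ over every open $V$, and symmetrically for $\phi^R$. Applying Corollary \ref{corollary1}$(1)$ then gives $\mathcal A$-isomorphisms
\[
\mathcal E / \mathcal F^{\top^\phi} \simeq \mbox{Im}\,\phi^L \subseteq \mathcal F^\ast, \qquad \mathcal F / \mathcal E^{\perp_\phi} \simeq \mbox{Im}\,\phi^R \subseteq \mathcal E^\ast.
\]

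Next, I would unwind the definition of orthogonal in the two canonical pairings $(\mathcal F^\ast, \mathcal F; \mathcal A)$ and $(\mathcal E^\ast, \mathcal E; \mathcal A)$ to identify
\[
(\mbox{Im}\,\phi^L)^\top = \mathcal E^{\perp_\phi} \subseteq \mathcal F, \qquad (\mbox{Im}\,\phi^R)^\top = \mathcal F^{\top^\phi} \subseteq \mathcal E,
\]
using, e.g., that $t$ annihilates every $\phi^L(s)$ iff $\phi_U(\mathcal E(U),t)=0$. Granting that the two images are free sub-$\mathcal A$-modules, Theorem \ref{theorem7} yields
\[
\dim \mbox{Im}\,\phi^L = \mbox{codim}_{\mathcal F} \mathcal E^{\perp_\phi}, \qquad \dim \mbox{Im}\,\phi^R = \mbox{codim}_{\mathcal E} \mathcal F^{\top^\phi},
\]
and combining this with the previous display gives
\[
\dim (\mathcal E / \mathcal F^{\top^\phi}) = \mbox{codim}_{\mathcal F} \mathcal E^{\perp_\phi} = \dim (\mathcal F / \mathcal E^{\perp_\phi}),
\]
so the two free quotient $\mathcal A$-modules have the same finite rank and are therefore $\mathcal A$-isomorphic, as claimed.

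The main obstacle I anticipate is the appeal to Theorem \ref{theorem7}, which requires $\mbox{Im}\,\phi^L$ and $\mbox{Im}\,\phi^R$ to be \emph{free} sub-$\mathcal A$-modules of $\mathcal F^\ast$ and $\mathcal E^\ast$ respectively. This is precisely where the orthogonal convenience hypothesis must be activated: it guarantees that $\mathcal F^{\top^\phi}\subseteq \mathcal E$ and $\mathcal E^{\perp_\phi}\subseteq \mathcal F$ are free, so one may choose free complements, say $\mathcal E = \mathcal F^{\top^\phi} \oplus \mathcal E'$ and $\mathcal F = \mathcal E^{\perp_\phi} \oplus \mathcal F'$. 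Then $\phi^L$ restricted to $\mathcal E'$ is an $\mathcal A$-monomorphism with image $\mbox{Im}\,\phi^L$, so $\mbox{Im}\,\phi^L \simeq \mathcal E'$ is free; and likewise $\mbox{Im}\,\phi^R \simeq \mathcal F'$ is free. This splitting argument simultaneously verifies the freeness of the quotients asserted in the statement and supplies the freeness of the images that Theorem \ref{theorem7} needs; once the splittings are in hand, everything else is bookkeeping with the universal property of quotients and the duality results of Section 2.
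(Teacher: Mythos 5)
Your argument is correct, and it takes a genuinely different route from the paper's. The paper constructs, directly on the quotients $\mathcal E/\mathcal F^{\top^\phi}$ and $\mathcal F/\mathcal E^{\perp_\phi}$, the induced $\mathcal A$-bilinear morphism $\overline\phi$, verifies that it is well defined and non-degenerate, and then invokes the observation (recorded just after the definition of the insertion morphisms) that a non-degenerate $\mathcal A$-bilinear morphism between free $\mathcal A$-modules of finite dimension forces them to be $\mathcal A$-isomorphic. You instead route the whole computation through the insertion morphism $\phi^L$: identify $\ker\phi^L=\mathcal F^{\top^\phi}$, apply Corollary~\ref{corollary1}$(1)$ to get $\mathcal E/\mathcal F^{\top^\phi}\simeq\operatorname{Im}\phi^L\subseteq\mathcal F^\ast$, recognize $(\operatorname{Im}\phi^L)^\top=\mathcal E^{\perp_\phi}$, and then read off $\dim\operatorname{Im}\phi^L=\operatorname{codim}_{\mathcal F}\mathcal E^{\perp_\phi}$ from Theorem~\ref{theorem7}. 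Both proofs are sound; the paper's is more self-contained and in fact yields a bit more information (a canonical non-degenerate pairing between the two quotients, not merely an abstract isomorphism of free modules of equal rank), while yours is leaner and shows the proposition falling out of the Section~2--3 biorthogonality machinery rather than requiring a fresh construction. One small point worth being explicit about: your appeal to Theorem~\ref{theorem7} needs $\operatorname{Im}\phi^L$ to be a free sub-$\mathcal A$-module of $\mathcal F^\ast$, and you correctly trace this back to the orthogonal-convenience hypothesis via the splitting $\mathcal E=\mathcal F^{\top^\phi}\oplus\mathcal E'$; that splitting in turn rests on freeness of the quotient $\mathcal E/\mathcal F^{\top^\phi}$ (so that the exact sequence splits by projectivity), which is the same fact the paper cites from \cite{malliosntumbaqm2}, so you are not silently using anything the paper does not.
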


\begin{proof}
Since $((\mathcal{F}, \mathcal{E}; \phi); \mathcal{A})$ is
\textit{orthogonally convenient,} kernels
$\mathcal{E}^{\perp_\phi}$ and $\mathcal{F}^{\top^\phi}$ are free
sub-$\mathcal A$-modules of $\mathcal F$ and $\mathcal E$,
respectively. By \cite{malliosntumbaqm2}, it follows that the
quotient $\mathcal A$-modules
$\mathcal{E}/\mathcal{F}^{\top^\phi}$ and
$\mathcal{F}/\mathcal{E}^{\perp_\phi}$ are free, and for any open
subset $U$ of $X$,
\[
(\mathcal{E}/\mathcal{F}^{\top^\phi})(U)=
\mathcal{E}(U)/\mathcal{F}^{\top^\phi}(U)=
\mathcal{E}(U)/\mathcal{F}(U)^{\top^\phi}
\]
and
\[
(\mathcal{F}/\mathcal{E}^{\perp_\phi})(U)=
\mathcal{F}(U)/\mathcal{E}^{\perp_\phi}(U)=
\mathcal{F}(U)/\mathcal{E}(U)^{\perp_\phi}
\]
within $\mathcal{A}(U)$-isomorphism. Clearly, for a fixed open
$U\subseteq X$, if $s\in \mathcal{E}(U)$ and $t, t_1\in
\mathcal{F}(U)$ such that $t-t_1\in \mathcal{E}^{\perp_\phi}(U)$,
then
\[
\phi_U(s, t)= \phi_U(s, t_1).
\]

In the same vein, if $s=s_1 \mod \mathcal{F}^{\top^\phi}(U)$ and
$t=t_1 \mod \mathcal{E}^{\perp_\phi}(U),$ then
\[
\phi_U(s, t)= \phi_U(s_1, t_1).
\]

Now, let us consider the $\mathcal A$-bilinear morphism
\[
\overline{\phi}\equiv (\overline{\phi}_U)_{X\supseteq U,\
open}\equiv ((\overline{\phi})_U)_{X\supseteq U,\ open}:
\mathcal{E}/\mathcal{F}^{\top^\phi}\oplus
\mathcal{F}/\mathcal{E}^{\perp_\phi}\longrightarrow \mathcal{A},
\]
induced by the $\mathcal A$-bilinear morphism $\phi,$ which is
such that, for any open $U\subseteq X$ and sections
$\overline{s}:= \mbox{cl}(s) \mod\mathcal{F}^{\top^\phi}(U),$
$\overline{t}:= \mbox{cl}(t) \mod \mathcal{E}^{\perp_\phi}(U)$
($\mbox{cl}(s)$ stand for the \textit{equivalence class
containing} $s$), one has
\[
\overline{\phi}_U(\overline{s}, \overline{t}):= \phi_U(s, t).
\]
It is clear that $\overline{\phi}_U(\overline{s}, \overline{t})=0$
for any $\overline{s}\in (\mathcal{E}/\mathcal{F}^{\top^\phi})(U)=
\mathcal{E}(U)/\mathcal{F}^{\top^\phi}(U)$ is equivalent to
$\phi_U(s, t)=0$ for any $s\in \mathcal{E}(U);$ therefore $t\in
\mathcal{E}^{\perp_\phi}(U)=0$  and hence $\overline{t}=0.$ This
implies that
$(\mathcal{E}/\mathcal{F}^{\top^\phi})^{\perp_\phi}=0.$ Similarly,
that $\overline{\phi}_U(\overline{s}, \overline{t})=0$ for any
$\overline{t}\in (\mathcal{F}/\mathcal{E}^{\perp_\phi})(U)=
\mathcal{F}(U)/\mathcal{E}^{\perp_\phi}(U)$ is equivalent to
$\overline{s}=0,$ from which we deduce that
$(\mathcal{F}/\mathcal{E}^{\perp_\phi})^{\top^\phi}=0.$ Hence,
$\overline{\phi}$ is non-degenerate; so
\[
\mathcal{E}/\mathcal{F}^{\top^\phi}=
\mathcal{F}/\mathcal{E}^{\perp_\phi}
\]
within an $\mathcal A$-isomorphism.
\end{proof}

Based on Proposition \ref{proposition1}, we make the following
definition.

\begin{definition}
\emph{Let $\mathcal{E}$, $\mathcal F$ be free $\mathcal A$-modules,
and $((\mathcal{F}, \mathcal{E}; \phi); \mathcal{A})$ an
orthogonally convenient $\mathcal A$-pairing. The dimension of the
free $\mathcal A$-module $\mathcal{E}/\mathcal{F}^{\top^\phi}\cong
\mathcal{F}/\mathcal{E}^{\perp_\phi}$ is called the \textbf{rank of
$\phi$.}}
\end{definition}

\begin{theorem}\label{theorem8}
Let $\mathcal A$ be a PID algebra sheaf, $(\mathcal{F}, \mathcal{E};
\phi)$ an orthogonally convenient $\mathcal A$-pairing with $\dim
\mathcal E$ and $\dim \mathcal F$ finite. Moreover, let $\phi^L$ and
$\phi^R$ be the left and right insertion $\mathcal A$-morphisms
associated with $\phi.$ Then,
\begin{enumerate}
\item [{$(1)$}] For every free sub-$\mathcal A$-modules $\mathcal G$
and $\mathcal H$ of $\mathcal E$ and $\mathcal F$, respectively, one
has
\begin{enumerate}
\item [{$1.1)$}] $\phi^L(\mathcal{G})\simeq
(\mathcal{G}^{\perp_\phi})^\perp$ and $\phi^R(\mathcal{H})\simeq
(\mathcal{H}^{\top^\phi})^\perp.$
\item [{$1.2)$}] $\dim \phi^L(\mathcal{G})=
\emph{\mbox{codim}}_\mathcal{F}\mathcal{G}^{\perp_\phi}$ and $\dim
\phi^R(\mathcal{H})=
\emph{\mbox{codim}}_\mathcal{E}\mathcal{H}^{\top^\phi}.$
\end{enumerate}
\item [{$(2)$}] $\mathcal A$-morphisms $\phi^L$ and $\phi^R$ have
the same rank:
\begin{equation}\label{eq15}
\emph{\mbox{rank}}(\phi^L)= \emph{\mbox{rank}}(\phi^R),
\end{equation}
which is the rank of $\phi.$
\end{enumerate}
\end{theorem}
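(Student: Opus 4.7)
The plan is to use Lemma \ref{lemma1} to translate all $\phi$-orthogonals into orthogonals in the canonical $\mathcal{A}$-pairings $(\mathcal{F}^\ast, \mathcal{F}; \nu)$ and $(\mathcal{E}^\ast, \mathcal{E}; \nu)$, and then to feed the translated statements directly into Theorem \ref{theorem7} and the dimension formula (Corollary \ref{corollary4}). In this way the theorem becomes an essentially formal consequence of the already-developed canonical theory.

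For assertion $1.1)$, I would start from the isomorphism $\mathcal{G}^{\perp_\phi}\simeq (\phi^L(\mathcal{G}))^\top$ of Lemma \ref{lemma1}. The standing PID hypothesis on $\mathcal{A}$ (together with the implicit connectedness of $X$) guarantees that $\phi^L(\mathcal{G})\subseteq \mathcal{F}^\ast$ is free of finite dimension, so that Theorem \ref{theorem7} applies to it inside the canonical pairing $(\mathcal{F}^\ast, \mathcal{F}; \nu)$ and yields $((\phi^L(\mathcal{G}))^\top)^\perp = \phi^L(\mathcal{G})$. Taking $\perp$ (in this canonical pairing) on both sides of the Lemma \ref{lemma1} isomorphism gives
\[
(\mathcal{G}^{\perp_\phi})^\perp \simeq \phi^L(\mathcal{G}).
\]
The companion identity $\phi^R(\mathcal{H})\simeq (\mathcal{H}^{\top^\phi})^\perp$ is obtained by repeating the same argument with the second isomorphism of Lemma \ref{lemma1}, now using the canonical pairing $(\mathcal{E}^\ast, \mathcal{E}; \nu)$. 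Assertion $1.2)$ then follows by combining the same Lemma \ref{lemma1} isomorphism with the dimension relation $\dim \phi^L(\mathcal{G}) = \mbox{codim}_\mathcal{F}(\phi^L(\mathcal{G}))^\top$ of Theorem \ref{theorem7} (and its analogue for $\phi^R$).

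For assertion $(2)$, I would observe that $\ker \phi^L = \mathcal{F}^{\top^\phi}$ and $\ker \phi^R = \mathcal{E}^{\perp_\phi}$ straight from the definitions of the insertion morphisms. The dimension formula (Corollary \ref{corollary4}), applied to $\phi^L: \mathcal{E}\rightarrow \mathcal{F}^\ast$ and $\phi^R: \mathcal{F}\rightarrow \mathcal{E}^\ast$, then gives
\[
\mbox{rank}(\phi^L) = \dim\mathcal{E} - \dim\mathcal{F}^{\top^\phi} = \dim(\mathcal{E}/\mathcal{F}^{\top^\phi}),\qquad \mbox{rank}(\phi^R) = \dim(\mathcal{F}/\mathcal{E}^{\perp_\phi}).
\]
Proposition \ref{proposition1} identifies the two quotients $\mathcal{E}/\mathcal{F}^{\top^\phi}$ and $\mathcal{F}/\mathcal{E}^{\perp_\phi}$ up to $\mathcal{A}$-isomorphism, and their common dimension is by definition the rank of $\phi$. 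Hence $\mbox{rank}(\phi^L) = \mbox{rank}(\phi^R) = \mbox{rank}(\phi)$.

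The step I expect to demand the most care is checking that $\phi^L(\mathcal{G})$ and $\phi^R(\mathcal{H})$ meet the hypothesis of Theorem \ref{theorem7}, namely that they are free sub-$\mathcal{A}$-modules of the relevant dual. This is precisely where the PID assumption on $\mathcal{A}$, together with the connectedness of $X$, cannot be dispensed with: under these hypotheses sub-$\mathcal{A}$-modules of finite-dimensional free $\mathcal{A}$-modules are vector sheaves of constant rank, exactly the class for which the biorthogonality machinery of the previous section (in particular Theorem \ref{theorem7} and Corollary \ref{corollary4}) was designed to operate.
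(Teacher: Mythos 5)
Your proof of part $(1)$ follows the paper's route exactly: Lemma~\ref{lemma1} reduces the $\phi$-orthogonal $\mathcal{G}^{\perp_\phi}$ to the canonical orthogonal $(\phi^L(\mathcal{G}))^\top$, Theorem~\ref{theorem7} supplies the biorthogonality identity $((\phi^L(\mathcal{G}))^\top)^\perp = \phi^L(\mathcal{G})$ together with $\dim \phi^L(\mathcal{G}) = \mbox{codim}_\mathcal{F}(\phi^L(\mathcal{G}))^\top$, and the claimed formulas drop out. You are also right to flag the need to verify that $\phi^L(\mathcal{G})$ is a free sub-$\mathcal{A}$-module of $\mathcal{F}^\ast$ before Theorem~\ref{theorem7} can be applied; the paper relies on the PID assumption at this point without making the check explicit, and it is a genuine hypothesis and not a formality.

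Your treatment of part $(2)$ differs modestly and is arguably tidier. You correctly identify $\ker\phi^L = \mathcal{F}^{\top^\phi}$ and $\ker\phi^R = \mathcal{E}^{\perp_\phi}$ (the paper misstates these, swapping them in the sentence preceding its displayed equations, though the displays themselves use the correct kernels), apply Corollary~\ref{corollary4} to get $\mbox{rank}(\phi^L) = \dim(\mathcal{E}/\mathcal{F}^{\top^\phi})$ and $\mbox{rank}(\phi^R) = \dim(\mathcal{F}/\mathcal{E}^{\perp_\phi})$, and then invoke Proposition~\ref{proposition1} to identify the two quotients. The paper's own proof does not call on Proposition~\ref{proposition1} here; instead it specializes $1.2)$ to $\mathcal{G}=\mathcal{E}$ to obtain $\mbox{rank}(\phi^L)=\mbox{codim}_\mathcal{F}\mathcal{E}^{\perp_\phi}$ via the canonical biorthogonality theory, computes $\mbox{rank}(\phi^R)$ by the dimension formula to be that same codimension, and compares. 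Both routes are sound; yours reuses a previously established isomorphism and is slightly more economical, while the paper's is self-contained inside the proof of the theorem. The concluding step that the common value equals $\mbox{rank}(\phi)$ is, as you say, immediate from the definition of $\mbox{rank}(\phi)$ as $\dim(\mathcal{E}/\mathcal{F}^{\top^\phi}) = \dim(\mathcal{F}/\mathcal{E}^{\perp_\phi})$.
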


\begin{proof}
\textit{Assertion $(1)$.} Since $(\mathcal{F}, \mathcal{E}; \phi)$
is orthogonally convenient, the sub-$\mathcal A$-module
$\mathcal{G}^{\perp_\phi}$ is free, and thus
\[
\mathcal{G}^{\perp_\phi}(U)\simeq \mathcal{G}(U)^{\perp_\phi}
\]
for every open $U\subseteq X$. By Lemma \ref{lemma1},
\[
\mathcal{G}^{\perp_\phi}= (\phi^L(\mathcal{G}))^\top
\]
within an $\mathcal A$-isomorphism. Applying Theorem \ref{theorem7},
and since $\dim \mathcal F$ is finite, we have
\[
(\mathcal{G}^{\perp_\phi})^\perp= \phi^L\mathcal G
\]
within an $\mathcal A$-isomorphism. By the same theorem along with
Theorem \ref{theorem6}, it follows that
\[
\dim \mathcal{G}^{\perp_\phi}+ \dim \phi^L\mathcal{G}= \dim
{\mathcal F},
\]
from which we deduce that
\[
\dim \phi^L\mathcal{G}=
\mbox{codim}_\mathcal{F}\mathcal{G}^{\perp_\phi}.
\]

In particular,
\begin{equation}\label{eq11}
\mbox{rank}(\phi^L)=
\mbox{codim}_\mathcal{F}\mathcal{E}^{\perp_\phi}.
\end{equation}

In a similar way, one shows the claims related to the induced
$\mathcal A$-morphism $\phi^R$ by using the fact that $\dim \mathcal
E$ is finite. The analog of (\ref{eq11}) is
\begin{equation}\label{eq12}
\mbox{rank}(\phi^L)=
\mbox{codim}_\mathcal{E}\mathcal{F}^{\top^\phi}.
\end{equation}

\textit{Assertion $(2)$.} That
\[\begin{array}{lll}
\ker(\phi^L)\simeq \mathcal{E}^{\perp_\phi} & \mbox{and} &
\ker(\phi^R)\simeq \mathcal{F}^{\top^\phi}\end{array}
\]
is immediate. Applying the \textit{dimension formula} (Corollary
\ref{corollary4}), we obtain
\begin{equation}\label{eq13}
\mbox{rank}(\phi^R):= \dim \phi^R(\mathcal{F})= \dim \mathcal{F}-
\dim \mathcal{E}^{\perp_\phi}=
\mbox{codim}_\mathcal{F}\mathcal{E}^{\perp_\phi},
\end{equation}
and
\begin{equation}\label{eq14}
\mbox{rank}(\phi^L):= \dim \phi^L(\mathcal{E})= \dim \mathcal{E}-
\dim \mathcal{F}^{\top^\phi}=
\mbox{codim}_\mathcal{E}\mathcal{F}^{\top^\phi}.
\end{equation}
From (\ref{eq11}), (\ref{eq12}), (\ref{eq13}) and (\ref{eq14}), one
gets (\ref{eq15}).
\end{proof}

\begin{corollary}
Let $\mathcal A$ be a PID algebra sheaf and $(\mathcal{F},
\mathcal{E}; \phi)$ an orthogonally convenient $\mathcal
A$-pairing with free $\mathcal A$-modules $\mathcal E$ and
$\mathcal F$ both of finite dimension.
\begin{enumerate}
\item [{$(1)$}] For every free sub-$\mathcal A$-modules $\mathcal
G$ and $\mathcal H$ of $\mathcal E$ and $\mathcal F$,
respectively, one has
\begin{enumerate}
\item [{$1.1)$}] $\dim \mathcal{G}^{\perp_\phi}\geq \dim
\mathcal{F}- \dim \mathcal{G}$ and $\dim
\mathcal{H}^{\top^\phi}\geq \dim \mathcal{E}- \dim \mathcal{H}$
\item [{$1.2)$}] $(\mathcal{G}^{\perp_\phi})^{\top^\phi}\supseteq
\mathcal{G}$ and $(\mathcal{H}^{\top^\phi})^{\perp_\phi}\supseteq
\mathcal{H}.$
\end{enumerate}
\item [{$(2)$}] If $\phi$ is nondegenerate, then
\begin{enumerate}
\item [{$2.1)$}] $\dim \mathcal{G}^{\perp_\phi}+ \dim \mathcal{G}=
\dim \mathcal{F}= \dim \mathcal{E}= \dim \mathcal{H}^{\top^\phi}+
\dim \mathcal{H}$ \item [{$2.2)$}]
$(\mathcal{G}^{\perp_\phi})^{\top^\phi}\simeq \mathcal{G}$ and
$(\mathcal{H}^{\top^\phi})^{\perp_\phi}\simeq \mathcal{H}.$
\end{enumerate}
\end{enumerate}
\end{corollary}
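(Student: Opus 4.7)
The statement is a direct harvest of Theorem \ref{theorem8}, so my proof plan is to derive each clause as a short consequence of that theorem, of the biorthogonality inclusion, and of the dimension formula (Corollary \ref{corollary4}), taking care that everything done section-wise reassembles at the sheaf level because $\mathcal{A}$ is a PID.

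For assertion $(1.1)$, I would start from Theorem \ref{theorem8}$(1.2)$: $\dim\phi^L(\mathcal{G})=\mbox{codim}_{\mathcal F}\mathcal{G}^{\perp_\phi}$. Since the restriction of $\phi^L$ to $\mathcal G$ factors $\mathcal{G}\twoheadrightarrow \phi^L(\mathcal G)$, Corollary \ref{corollary4} applied to this free $\mathcal A$-morphism forces $\dim\phi^L(\mathcal G)\leq \dim\mathcal G$, so $\dim \mathcal{G}^{\perp_\phi}=\dim\mathcal{F}-\dim\phi^L(\mathcal G)\geq \dim\mathcal F-\dim\mathcal G$. The statement for $\mathcal H^{\top^\phi}$ is symmetric via $\phi^R$. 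Assertion $(1.2)$ is then immediate from the definitions: for any open $U$ and any $r\in\mathcal{G}(U)$, every $t\in\mathcal{G}^{\perp_\phi}(U)$ satisfies $\phi_V(r|_V,t|_V)=0$ on every open $V\subseteq U$, which is exactly the condition for $r\in(\mathcal{G}^{\perp_\phi})^{\top^\phi}(U)$; and symmetrically.

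For assertion $(2.1)$, nondegeneracy gives $\mathcal{E}^{\perp_\phi}=0=\mathcal{F}^{\top^\phi}$, so by the identifications $\ker\phi^R\simeq \mathcal{E}^{\perp_\phi}$ and $\ker\phi^L\simeq \mathcal{F}^{\top^\phi}$ both insertion $\mathcal A$-morphisms are injective. Using Corollary \ref{corollary4} first on $\phi^L$ and $\phi^R$ themselves, one obtains $\dim\phi^L(\mathcal E)=\dim\mathcal E$ and $\dim\phi^R(\mathcal F)=\dim\mathcal F$; since $\phi^L(\mathcal E)\subseteq \mathcal{F}^\ast\simeq\mathcal F$ and $\phi^R(\mathcal F)\subseteq \mathcal{E}^\ast\simeq\mathcal E$, the two chains of inequalities collapse to $\dim\mathcal E=\dim\mathcal F$. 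Now applying the dimension formula to $\phi^L|_{\mathcal G}$ (injective, with image $\phi^L(\mathcal G)$) yields $\dim\phi^L(\mathcal G)=\dim\mathcal G$, and combining with Theorem \ref{theorem8}$(1.2)$ one gets $\dim\mathcal G+\dim\mathcal{G}^{\perp_\phi}=\dim\mathcal F$. The parallel computation with $\phi^R|_{\mathcal H}$ produces $\dim\mathcal H+\dim\mathcal{H}^{\top^\phi}=\dim\mathcal E$, and the chain of equalities in $(2.1)$ follows.

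For assertion $(2.2)$, I would apply $(2.1)$ to the free sub-$\mathcal A$-module $\mathcal{G}^{\perp_\phi}$ (which is free because $(\mathcal{F},\mathcal{E};\phi)$ is orthogonally convenient) to get
\[
\dim(\mathcal{G}^{\perp_\phi})^{\top^\phi}=\dim\mathcal E-\dim\mathcal{G}^{\perp_\phi}=\dim\mathcal F-(\dim\mathcal F-\dim\mathcal G)=\dim\mathcal G.
\]
Combined with the inclusion $\mathcal{G}\subseteq(\mathcal{G}^{\perp_\phi})^{\top^\phi}$ from $(1.2)$, equality of ranks on each stalk/section (via Corollary \ref{corollary4}) promotes the inclusion to an $\mathcal A$-isomorphism, and the $\mathcal H$ case is identical. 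The step I expect to be the main technical hiccup is precisely this last move: concluding from a containment of free sub-$\mathcal A$-modules with equal (constant) dimension that the containment is actually an isomorphism. This is where the PID hypothesis is essential, since it makes the intersection of free sub-$\mathcal A$-modules free and makes the section-wise rank equality a genuine sheaf-level statement via Corollary \ref{corollary4}; without it, the cokernel of the inclusion could have trivial stalks but nontrivial torsion at the section level.
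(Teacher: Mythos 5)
Your proof follows the paper's own argument almost line for line: in $(1.1)$ you combine Theorem \ref{theorem8}$(1.2)$ with the dimension formula applied to $\phi^L|_{\mathcal G}$ to deduce $\dim\phi^L(\mathcal G)\leq\dim\mathcal G$ (the paper spells it out as $\dim\phi^L(\mathcal{G})=\dim\mathcal{G}-\dim(\ker\phi^L\cap\mathcal{G})$, which is the same estimate); in $(2.1)$ you establish $\dim\mathcal E=\dim\mathcal F$, observe that $\phi^L$ is then an $\mathcal A$-isomorphism onto $\mathcal F^\ast$, and conclude $\dim\phi^L(\mathcal G)=\dim\mathcal G$, exactly as in the paper; and in $(2.2)$ you apply $(2.1)$ to $\mathcal{G}^{\perp_\phi}$ to get $\dim(\mathcal{G}^{\perp_\phi})^{\top^\phi}=\dim\mathcal G$ and invoke the containment from $(1.2)$. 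So in substance this is the paper's proof, with the minor difference that you write out $(1.2)$ and the equality $\dim\mathcal E=\dim\mathcal F$, both of which the paper treats as immediate.

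The one place where you part company with the paper is your closing remark about $(2.2)$: you identify ``containment of free sub-$\mathcal A$-modules of equal rank implies the containment is an isomorphism'' as the technical hinge and credit the PID hypothesis with supplying it. That reasoning does not hold as stated: over a PID, an inclusion of free modules of equal finite rank need not be an equality (the inclusion $2\mathbb{Z}\subset\mathbb{Z}$ of free $\mathbb{Z}$-modules of rank $1$ is the standard counterexample), so ``PID makes section-wise rank equality a sheaf-level equality'' is not a legitimate step. What actually rescues the argument is the weaker reading that the paper (and the corollary statement) intends: the conclusion is $(\mathcal{G}^{\perp_\phi})^{\top^\phi}\simeq\mathcal G$, an $\mathcal A$-isomorphism in the abstract sense, and any two free $\mathcal A$-modules of the same finite rank are $\mathcal A$-isomorphic simply by matching bases. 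The containment from $(1.2)$ is cited in the paper as supporting intuition, but the isomorphism itself does not depend on the inclusion being an equality. If you genuinely wanted the stronger statement — that the inclusion map $\mathcal{G}\hookrightarrow(\mathcal{G}^{\perp_\phi})^{\top^\phi}$ is surjective — you would need an extra input beyond PID and equal rank (for instance, that the quotient is torsion-free and hence, being finite-rank free with rank zero, trivial); neither your sketch nor the paper supplies that. As written, take the corollary at face value with $\simeq$ meaning abstract $\mathcal A$-isomorphism, and your proof, minus the final speculative paragraph, coincides with the paper's.
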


\begin{proof} \textit{Assertion $(1).$}
Theorem \ref{theorem8} shows that
\[
\dim \phi^L(\mathcal{G})=
\mbox{codim}_\mathcal{F}\mathcal{G}^{\perp_\phi}= \dim
\mathcal{F}- \dim \mathcal{G}^{\perp_\phi}.
\]
On the other hand, by virtue of Corollary \ref{corollary4}, one
has
\[
\dim \phi^L(\mathcal{G})= \dim \mathcal{G}- \dim (\ker \phi^L\cap
\mathcal{G}).
\]
It follows, in particular, that
\[
\dim \mathcal{G}\geq \dim \phi^L(\mathcal{G}),
\]
from which we have
\[
\dim \mathcal{G}^{\perp_\phi}\geq \dim \mathcal{F}- \dim
\mathcal{G}.
\]
Likewise, one shows the second inequality of $1.1)$.

\textit{Assertion $(2).$} If $\phi$ is nondegenerate, $\dim
\mathcal{E}= \dim \mathcal F$; therefore $\phi^L$ is an $\mathcal
A$-isomorphism of $\mathcal E$ onto $\mathcal{F}^\ast$. Thus,
$\dim \phi^L(\mathcal{G})= \dim \mathcal G$, and
\[
\dim \mathcal{G}^{\perp_\phi}= \dim \mathcal{F}- \dim \mathcal{G}.
\]
Likewise, one has
\[
\dim\mathcal{H}^{\top^\phi}= \dim \mathcal{E}- \dim \mathcal{H}.
\]

Applying relation $2.1)$ to the free sub-$\mathcal A$-modules
$\mathcal G$ and $\mathcal{G}^{\perp_\phi}$ of $\mathcal E$ and
$\mathcal F$, respectively, we see that
\[
\dim (\mathcal{G}^{\perp_\phi})^{\top^\phi}= \dim \mathcal{G}.
\]
Since $\mathcal G$ is contained in
$(\mathcal{G}^{\perp_\phi})^{\top^\phi},$ it follows that
\[
(\mathcal{G}^{\perp_\phi})^{\top^\phi}= \mathcal G
\]
within an $\mathcal A$-isomorphism. In a similar way, we show that
$(\mathcal{H}^{\top^\phi})^{\perp_\phi}= \mathcal H$ within an
$\mathcal A$-isomorphism.
\end{proof}

We will soon turn to the hyperbolic decomposition theorem for
$\mathcal A$-modules. However, the so-called theorem requires some
preparations.

\begin{lemma}\label{lemma2}
Let $(\mathcal{E}, \phi)$ be a symplectic $\mathcal A$-module, $U$
an open subset of $X$ and $(r_1, \ldots, r_n)\subseteq
\mathcal{E}(U)$ an arbitrary $($local$)$ gauge of $\mathcal E$. For
any $r\equiv r_i$, $1\leq i\leq n$, there exists a nowhere-zero
section $s\in \mathcal{E}(U)$ such that $\phi_U(r, s)$ is nowhere
zero.
\end{lemma}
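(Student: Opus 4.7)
The plan is to exhibit $s$ explicitly as a linear combination of the gauge sections $r_1,\ldots,r_n$, with coefficients obtained by inverting the Gram matrix of $\phi$ in the given gauge. Writing $r = r_i$, I would first collect the entries
\[
a_{jk} := \phi_U(r_j, r_k) \in \mathcal{A}(U), \qquad 1 \leq j,k \leq n,
\]
into a matrix $A := (a_{jk})$, noting that $A$ is skew-symmetric with vanishing diagonal since $\phi$ is alternating. The target is to realize $s$ in the form $\sum_{j} c_{j} r_{j}$ with $c_{j}\in \mathcal{A}(U)$ chosen so that $\phi_U(r,s)$ equals the unit section $1\in \mathcal{A}(U)$; both conclusions of the lemma will then follow at once.

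The crucial step is to show that $A$ is invertible as a matrix over $\mathcal{A}(U)$. Since $\phi$ is symplectic it is, in particular, non-degenerate, so by the remark following the definition of insertion $\mathcal{A}$-morphisms the left insertion $\phi^{L} : \mathcal{E}\to \mathcal{E}^{\ast}$ is injective. Injectivity being a stalk-local property, $\phi^{L}_{x}$ is injective at every $x\in U$; expressed in the gauge $(r_{j}(x))$ of $\mathcal{E}_{x}$ and its dual gauge of $\mathcal{E}^{\ast}_{x}$, this stalk map is represented precisely by $A(x)$, so $\det A(x) \neq 0$ throughout $U$. Consequently $\det A$ is a nowhere-zero section of $\mathcal{A}(U)$, and the standing hypothesis that every such section of $\mathcal{A}$ is invertible yields $\det A \in \mathcal{A}(U)^{\times}$, i.e.\ $A \in \mbox{GL}_{n}(\mathcal{A}(U))$.

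With $A$ invertible, I solve $A\,c = e_{i}$ for the column $c = (c_{1},\ldots,c_{n})^{T}\in \mathcal{A}(U)^{n}$, where $e_{i}$ is the $i$-th standard basis vector, and set $s := \sum_{j} c_{j} r_{j} \in \mathcal{E}(U)$. Then
\[
\phi_U(r, s) \;=\; \sum_{j} a_{ij} c_{j} \;=\; (A c)_{i} \;=\; 1 \;\in\; \mathcal{A}(U)^{\times},
\]
which is trivially nowhere zero. The section $s$ is itself nowhere zero on $U$: for each $x\in U$, the relation $\phi_{x}(r(x), s(x)) = 1 \neq 0$ is incompatible with $s(x) = 0$ by $\mathcal{A}_{x}$-bilinearity of $\phi_{x}$.

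The step I expect to be the main obstacle is the middle one, namely bootstrapping from sheaf-level non-degeneracy of $\phi$ (which by itself only furnishes pointwise injectivity of $\phi^{L}$) to genuine invertibility of the Gram matrix $A$ over $\mathcal{A}(U)$. This passage leans critically on the standing algebra-sheaf hypothesis that nowhere-zero sections of $\mathcal{A}$ are invertible; without this feature, pointwise non-vanishing of $\det A$ would not be enough to produce a global inverse, and the explicit construction of $s$ above would break down.
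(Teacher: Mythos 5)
Your proposal is correct and takes essentially the same approach as the paper: both proofs reduce the lemma to a statement about the Gram matrix $A=(\phi_U(r_j,r_k))$ of $\phi$ in the given gauge, argue from non-degeneracy that $\det A$ is nowhere zero, and then exhibit $s$ as an explicit $\mathcal{A}(U)$-linear combination of $r_1,\ldots,r_n$. The only real difference is the bookkeeping used to produce the coefficient vector. You invert $A$ (invoking the standing hypothesis that nowhere-zero sections of $\mathcal{A}$ are invertible to promote ``nowhere-zero determinant'' to ``$A\in\mbox{GL}_n(\mathcal{A}(U))$'') and solve $Ac=e_i$, arranging $\phi_U(r,s)=1$. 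The paper instead sets $r=r_1$ without loss of generality and reads off the coefficients from the Laplace expansion of $\det D$ along the first row, so that $\phi_U(r,s)=\det D$ directly, without ever inverting the matrix or appealing to the invertibility-of-nowhere-zero-sections hypothesis. The cofactor route is thus marginally lighter on hypotheses (nowhere-zero suffices, no need for a genuine inverse), while your version has the cosmetic advantage of normalizing $\phi_U(r,s)$ to $1$. You also explicitly verify that $s$ itself is nowhere zero, a point the paper leaves implicit; this is a small improvement. Both treatments gloss over the same delicate point — passing from injectivity of the induced map $\mathcal{E}\to\mathcal{E}^\ast$ to the pointwise non-vanishing of $\det A$ — so you are not more or less rigorous than the source on that score.
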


\begin{proof}
Without loss of generality, assume that $r_1=r$. On the other hand,
since the induced $\mathcal A$-morphism $\widetilde{\phi}\in
\emph{H}om_\mathcal{A}(\mathcal{E}, \mathcal{E}^\ast)$ is one-to-one
and both $\mathcal E$ and $\mathcal{E}^\ast$ have the same finite
rank, it follows that the matrix $D$ representing $\phi_U$ (see also
\cite[p. 357, Theorem 2.21, along with p. 356, Definition
2.19]{adkins} or \cite[p. 343, Proposition 20.3]{chambadal}), with
respect to the basis $(r_1, \ldots, r_n)$, has a
\textit{nowhere-zero determinant}; so since
\[\det D= \sum_{i=1}^n (-1)^{1+i}\phi(r_1, r_i)\det D_{1i}=
\phi(r_1, \sum_{i=1}^n (-1)^{1+i}\det D_{1i}r_i),\] where $D_{1i}$
is the minor of the corresponding $\phi(r_1, r_i)$, and $\det D$
nowhere zero, we thus have a section $s:= \sum_{i=1}^n
(-1)^{1+i}\det D_{1i}r_i\in \mathcal{E}(U)$ such that $\phi(r, s)$
is nowhere zero.
\end{proof}

For the purpose of Theorem \ref{theorem9} below, we require the
following result, see \cite{malliosntumba2}.

\begin{theorem}
Let $\mathcal E$ be a free $\mathcal A$-module of finite rank,
equipped with an $\mathcal A$-bilinear morphism $\phi:
\mathcal{E}\oplus \mathcal{E}\longrightarrow \mathcal{A}.$ Then,
every non-isotropic free sub-$\mathcal A$-module $\mathcal F$ of
$\mathcal E$ is a direct summand of $\mathcal E$; viz.
\[
\mathcal{E}= \mathcal{F}\bot \mathcal{F}^{\perp_\phi}.
\]
\end{theorem}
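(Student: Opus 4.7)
The plan is to reduce the assertion to a section-wise splitting and then glue via the sheaf axioms. Since $\bot$ presupposes an orthosymmetric $\phi$, the orthogonality of $\mathcal F$ and $\mathcal F^{\perp_\phi}$ is immediate from the definition of the $\phi$-orthogonal. Non-isotropy of $\mathcal F$, namely $\mathrm{rad}\,\mathcal F = \mathcal F \cap \mathcal F^{\perp_\phi} = 0$, gives at once that any sum $\mathcal F + \mathcal F^{\perp_\phi}$ is direct and that a decomposition of a section is unique when it exists. The substantive task is thus existence: on every open set $U$ over which $\mathcal F$ admits a gauge $(e_1,\dots,e_m)$, each $s \in \mathcal E(U)$ has to be written $s = r + t$ with $r \in \mathcal F(U)$ and $t \in \mathcal F^{\perp_\phi}(U)$.

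For the construction of $r$ I would use the Gram matrix $A := \bigl(\phi_U(e_i, e_j)\bigr) \in M_m(\mathcal A(U))$. Writing $r = \sum_{i=1}^m c_i e_i$ and imposing $\phi_U(s - r, e_j) = 0$ for every $j$ produces the linear system $A^{\mathrm T} c = b$ with $b_j := \phi_U(s, e_j)$. Non-isotropy forces the map $c \mapsto A^{\mathrm T} c$ to be injective on $\mathcal A(U)^m$; coupled with the standing hypothesis on $\mathcal A$ (every nowhere-zero section is invertible), this injectivity localises to every stalk and promotes $\det A$ to a nowhere-zero, hence invertible, element of $\mathcal A(U)$, so $A^{\mathrm T}$ is an $\mathcal A(U)$-isomorphism of $\mathcal A(U)^m$. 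Setting $c := (A^{\mathrm T})^{-1} b$ then delivers the desired $r$, and $t := s - r$ lies in $\mathcal F^{\perp_\phi}(U)$ by construction. Equivalently, one may phrase the argument through the left insertion $\mathcal A$-morphism of $\phi|_{\mathcal F \oplus \mathcal F}$: non-isotropy makes $(\phi|_\mathcal F)^L : \mathcal F \to \mathcal F^\ast$ injective, and finite rank together with the hypothesis on $\mathcal A$ turns it into an $\mathcal A$-isomorphism, so the functional $f \mapsto \phi_U(s, f)$ is realised by a unique $r \in \mathcal F(U)$.

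The main obstacle is precisely the passage from injectivity to invertibility of the Gram matrix: over a generic commutative ring this step fails, and it is the specific assumption that nowhere-zero sections of $\mathcal A$ are invertible (together with the finite rank of $\mathcal F$) that rescues it. Once this is secured, the uniqueness guaranteed by $\mathcal F \cap \mathcal F^{\perp_\phi} = 0$ ensures that the local pieces $r_U$ and $t_U$ are compatible under restriction to smaller open sets, and the sheaf axioms then assemble them into global $\mathcal A$-morphisms that realise the orthogonal direct summand decomposition $\mathcal E = \mathcal F \bot \mathcal F^{\perp_\phi}$.
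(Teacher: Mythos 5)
The paper does not actually prove this theorem: it is imported from the cited reference \cite{malliosntumba2} with no argument given, so there is no in-paper proof to compare your attempt against, and I can only assess your argument on its own terms. The Gram-matrix reduction is the classical route and it is sound. The one place that wants sharpening is the passage from injectivity of $A^{\mathrm T}$ to invertibility of $\det A$: non-isotropy of $\mathcal F$ gives $\mathrm{rad}\,\mathcal F(V) = \mathcal F(V) \cap \mathcal F(V)^{\top^\phi} = 0$ for \emph{every} open $V\subseteq U$, not just for $U$ itself, so $A^{\mathrm T}$ is injective over every $\mathcal A(V)$; McCoy's criterion then makes $\det A|_V$ a non-zero-divisor, hence nonzero, in every $\mathcal A(V)$, which is exactly what ``nowhere zero'' means; and the standing hypothesis on $\mathcal A$ then gives invertibility. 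Your phrase about injectivity ``localising to every stalk'' compresses the non-zero-divisor step, which is where the real content lies. Two smaller remarks: since $\mathcal F$ is a free sub-$\mathcal A$-module in the statement, the gauge exists over $X$ itself, so the decomposition $\mathcal E(U) = \mathcal F(U)\oplus\mathcal F^{\perp_\phi}(U)$ is obtained already with $U = X$ and the closing gluing discussion is redundant; and you are right that orthosymmetry of $\phi$ must be read into the statement, since $\mathrm{rad}\,\mathcal F$ and $\bot$ are only defined under that hypothesis.
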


So, we have

\begin{theorem}\label{theorem9}$($\textbf{Hyperbolic Decomposition
Theorem}$)$
Let $X$ be a connected topological space, $\mathcal A$ a PID algebra
sheaf on $X$, $(\mathcal E, \mathcal{E}; \phi)$ an orthogonally
convenient self-pairing, where $\mathcal E$ is a $($free-$)$
$\mathcal A$-module on $X$ and $\phi: \mathcal{E}\oplus
\mathcal{E}\longrightarrow \mathcal A$ a non-degenerate
skew-symmetric $\mathcal A$-bilinear morphism. Then, if $\mathcal F$
is a totally isotropic $($free$)$ sub-$\mathcal A$-module of rank
$k$, there is a \textsf{non-isotropic sub-$\mathcal A$-module
$\mathcal H$} of $\mathcal E$ of the form
\[
\mathcal{H}:= \mathcal{H}_1\bot\cdots \bot \mathcal{H}_k,
\]
where if $(r_{1, U}, \ldots, r_{k, U})$ is a basis of
$\mathcal{F}(U)$ $($with $U$ an open subset of $X)$, then $r_{i,
u}\in \mathcal{H}_i(U)$ for $1\leq i\leq k.$
\end{theorem}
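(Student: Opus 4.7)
The plan is to argue by induction on the rank $k$ of the totally isotropic free sub-$\mathcal{A}$-module $\mathcal{F}$, constructing a hyperbolic plane $\mathcal{H}_1$ attached to $r_{1,U}$ and then peeling it off via the decomposition $\mathcal{E} = \mathcal{H}_1 \perp \mathcal{H}_1^{\perp_\phi}$ supplied by the theorem immediately preceding this statement. In the base case $k = 1$, a local basis $(r_{1,U})$ of $\mathcal{F}(U)$ is given; applying Lemma \ref{lemma2} to the symplectic self-pairing $(\mathcal{E}, \phi)$ produces a section $s'_1 \in \mathcal{E}(U)$ with $\phi_U(r_1, s'_1)$ nowhere zero, and the standing assumption that every nowhere-zero section of $\mathcal{A}$ is invertible lets me normalise to $s_1 := \phi_U(r_1, s'_1)^{-1} s'_1$ so that $\phi_U(r_1, s_1) = 1$. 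Setting $\mathcal{H}_1 := \mathcal{A} r_1 \oplus \mathcal{A} s_1$, skew-symmetry forces $\phi_U(r_1, r_1) = \phi_U(s_1, s_1) = 0$, so $\phi|_{\mathcal{H}_1}$ has nowhere-zero Gram determinant in the local basis $(r_1, s_1)$; hence $\mathcal{H}_1$ is a non-isotropic free sub-$\mathcal{A}$-module of rank $2$ containing $r_{1,U}$.

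For the inductive step with $k \ge 2$ and a local basis $(r_1, \ldots, r_k)$ of $\mathcal{F}(U)$, I construct $s_1$ and $\mathcal{H}_1$ as above, invoke the cited theorem to write $\mathcal{E} = \mathcal{H}_1 \perp \mathcal{H}_1^{\perp_\phi}$, and then replace the remaining basis elements by
\[
r'_i := r_i - \phi_U(r_i, s_1)\, r_1, \qquad 2 \le i \le k.
\]
Total isotropy of $\mathcal{F}$ gives $\phi_U(r_j, r_i) = 0$, and combined with $\phi_U(r_1, s_1) = 1$ this yields $\phi_U(r'_i, r_1) = 0$ and $\phi_U(r'_i, s_1) = 0$, i.e.\ $r'_i \in \mathcal{H}_1^{\perp_\phi}(U)$. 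A standard elimination check shows $(r_1, r'_2, \ldots, r'_k)$ is again a basis of $\mathcal{F}(U)$, and the free sub-$\mathcal{A}$-module $\mathcal{F}'$ generated by $r'_2, \ldots, r'_k$ lies in $\mathcal{F} \cap \mathcal{H}_1^{\perp_\phi}$ and remains totally isotropic. Applying the induction hypothesis to the restricted self-pairing $(\mathcal{H}_1^{\perp_\phi}, \phi|_{\mathcal{H}_1^{\perp_\phi}})$ and to $\mathcal{F}'$ yields hyperbolic planes $\mathcal{H}_2 \perp \cdots \perp \mathcal{H}_k$ inside $\mathcal{H}_1^{\perp_\phi}$ with $r'_i \in \mathcal{H}_i(U)$, and I set $\mathcal{H} := \mathcal{H}_1 \perp \mathcal{H}_2 \perp \cdots \perp \mathcal{H}_k$; the adapted basis $(r_1, r'_2, \ldots, r'_k)$ of $\mathcal{F}(U)$ then satisfies the required incidence relations.

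The main obstacle I anticipate is verifying that the induction hypothesis genuinely applies to $(\mathcal{H}_1^{\perp_\phi}, \phi|_{\mathcal{H}_1^{\perp_\phi}})$. The module $\mathcal{H}_1^{\perp_\phi}$ is free by orthogonal convenience of the ambient pairing applied to the free $\mathcal{H}_1$; the restricted $\phi$ remains non-degenerate because any element of its radical is orthogonal to both $\mathcal{H}_1^{\perp_\phi}$ and $\mathcal{H}_1$, hence to all of $\mathcal{E} = \mathcal{H}_1 \perp \mathcal{H}_1^{\perp_\phi}$, so vanishes by the original non-degeneracy. To see that the restricted pairing remains orthogonally convenient, I would observe that the orthogonal within $\mathcal{H}_1^{\perp_\phi}$ of a free sub-$\mathcal{A}$-module $\mathcal{K}$ equals $(\mathcal{K} \oplus \mathcal{H}_1)^{\perp_\phi}$ taken in $\mathcal{E}$, which is free by the orthogonal convenience hypothesis since $\mathcal{K} \cap \mathcal{H}_1 = 0$ (as $\mathcal{H}_1$ is non-isotropic) makes $\mathcal{K} \oplus \mathcal{H}_1$ free. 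Sheaf-theoretically the construction is local on a trivialising open $U$, and the connectedness of $X$ together with the PID hypothesis on $\mathcal{A}$ (via the corollary following Definition \ref{definition2}) ensures that the locally built hyperbolic planes are vector sheaves that glue to a well-defined global direct orthogonal decomposition.
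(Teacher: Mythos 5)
Your induction runs in the opposite direction from the paper's: you peel off $\mathcal{H}_1$ (attached to $r_1$) and recurse inside $\mathcal{H}_1^{\perp_\phi}$, whereas the paper constructs $\mathcal{H}_k$ (attached to $r_k$) and recurses on $\mathcal{F}_{k-1}$ inside $\mathcal{H}_k^{\perp_\phi}$. Your verification that the restricted pairing on $\mathcal{H}_1^{\perp_\phi}$ is again non-degenerate and orthogonally convenient is sound and is a point the paper leaves implicit, so in that respect your write-up is tighter.

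However, there is a genuine gap tied to the choice of the partner section. You obtain $s_1$ from Lemma \ref{lemma2} with no constraint forcing $s_1$ to be orthogonal to $r_2, \ldots, r_k$, and you then compensate by a Gram--Schmidt replacement $r_i \mapsto r'_i := r_i - \phi_U(r_i, s_1)\,r_1$. The hyperbolic planes your induction produces therefore satisfy $r'_i \in \mathcal{H}_i(U)$, not $r_i \in \mathcal{H}_i(U)$; since $r_i = r'_i + \phi_U(r_i, s_1)\,r_1$ and $r_1 \in \mathcal{H}_1 \not\subseteq \mathcal{H}_i$, the original $r_i$ lands in $\mathcal{H}_1 \perp \mathcal{H}_i$ rather than in $\mathcal{H}_i$. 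The theorem fixes a basis $(r_{1,U}, \ldots, r_{k,U})$ in advance and asserts $r_{i,U} \in \mathcal{H}_i(U)$ for \emph{that} basis, so your argument proves a slightly weaker claim about an adapted basis. The paper sidesteps this by choosing the partner section $s_k$ from a complement of $\mathcal{F}_k^{\perp_\phi}$ inside $\mathcal{F}_{k-1}^{\perp_\phi}$; such an $s_k$ is automatically orthogonal to $r_1, \ldots, r_{k-1}$, so those sections already lie in $\mathcal{H}_k^{\perp_\phi}$ and no change of basis is needed. You can repair your version symmetrically: rather than invoking Lemma \ref{lemma2} on all of $\mathcal{E}$, exploit the strict inclusion $\mathcal{F}^{\perp_\phi} \varsubsetneqq \langle r_2, \ldots, r_k\rangle^{\perp_\phi}$ (which holds by orthogonal convenience and Theorem \ref{theorem6}) to pick $s_1 \in \langle r_2, \ldots, r_k\rangle^{\perp_\phi}$ with $\phi(r_1, s_1)$ nowhere zero; then $r_2, \ldots, r_k \in \mathcal{H}_1^{\perp_\phi}$ without modification and the rest of your argument goes through with the original basis.
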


\begin{proof}
Suppose that $k=1,$ i.e. $\mathcal{F}= \mathcal A$, within an
$\mathcal{A}$-isomorphism. If $\mathcal{F}(X)= [r_X]$ with $r_X\in
\mathcal{E}(X)$ a nowhere-zero section, then for every open
$U\subseteq X$, $r_U\equiv {r_X}|_U$ generates the
$\mathcal{A}(U)$-module $\mathcal{F}(U)$. Since $\phi_X$ is
non-degenerate, by Lemma \ref{lemma2}, there exists a nowhere-zero
section $s_X\in \mathcal{E}(X)$ such that $\phi_X(r_X, s_X)$ is
nowhere zero. The correspondence
\[
U\longmapsto \mathcal{H}(U):= [r_U, s_U]\equiv [{r_X}|_U, {s_X}|_U],
\]
where $U$ runs over the open sets in $X$, along with the obvious
restriction maps yields a \textit{complete presheaf of
$\mathcal{A}(U)$-modules} on $X$. Clearly, the pair $(\mathcal{H},
\overline{\phi})$, where $\overline{\phi}$ is the $\mathcal
A$-bilinear $\overline{\phi}: \mathcal{H}\oplus
\mathcal{H}\longrightarrow \mathcal A$ such that
\[
(r, s)\longmapsto \overline{\phi}_U(r, s):= \phi_U(r, s),
\]
where $r, s\in \mathcal{H}(U)$, is \textit{non-isotropic.} Hence,
the theorem holds for the case $k=1.$ Let us now proceed by
induction to $k>1.$ To this end, put $\mathcal{F}_{k-1}\simeq
\mathcal{A}^{k-1}$ and $\mathcal{F}_k:= \mathcal{F}\simeq
\mathcal{A}^k.$ Then, $\mathcal{F}_{k-1}\varsubsetneqq
\mathcal{F}_k,$ so $\mathcal{F}_k^{\perp_\phi}\varsubsetneqq
\mathcal{F}^{\perp_\phi}_{k-1}.$ Since \textit{orthogonal of free
sub-$\mathcal{A}$-modules in an orthogonally convenient
$\mathcal{A}$-module are free sub-$\mathcal{A}$-modules,} the
inclusion $\mathcal{F}_k^{\perp_\phi}\varsubsetneqq
\mathcal{F}^{\perp_\phi}_{k-1}$ implies that, if
$\mathcal{F}^{\perp_\phi}_{k-1}\simeq \mathcal{A}^m$ and
$\mathcal{F}_k^{\perp_\phi}\simeq \mathcal{A}^n$ with $n<m$, then
there exists a sub-$\mathcal A$-module $\mathcal{G}\subseteq
\mathcal{F}^\perp_{k-1}$ such that $\mathcal{G}\simeq
\mathcal{A}^{m-n}.$ For every open $U\subseteq X$, pick a
nowhere-zero section $s_{k, U}\in \mathcal{G}(U),$ and put
$\mathcal{H}_k(U)= [r_{k, U}, s_{k, U}].$ The correspondence
\[U\longmapsto \mathcal{H}_k(U),\]where $U$ is open in $X$, along
with the obvious restriction maps, is a complete presheaf of
$\mathcal{A}(U)$-modules. Since $\phi_U(r_{i, U}, s_{k, U})=0$ for
$1\leq i\leq k-1,$ $\phi_U(r_{k, U}, s_{k, U})$ is nowhere zero.
Hence, $\mathcal{H}_k(U)$ is a non-isotropic $\mathcal{A}(U)$-plane
containing $r_{k, U}.$ By Theorem \ref{theorem9} $\mathcal{E}=
\mathcal{H}_k\bot \mathcal{H}_k^{\perp_\phi}.$ Since $r_{k, U},
s_{k, U}\in \mathcal{F}^{\perp_\phi}_{k-1}(U),$
$\mathcal{H}_k(U)\subseteq \mathcal{F}_{k-1}^{\perp_\phi}(U)$ for
every open $U\subseteq X$; so $\mathcal{H}_k\subseteq
\mathcal{F}^{\perp_\phi}_{k-1},$ which in turn implies that
$\mathcal{F}_{k-1}\subseteq \mathcal{H}^{\perp_\phi}_k.$ Apply an
inductive argument to $\mathcal{F}_{k-1}$ regarded as a
sub-$\mathcal{A}$-module of the non-isotropic $\mathcal{A}$-module
$\mathcal{H}^{\perp_\phi}_k$.
\end{proof}

\addcontentsline{toc}{section}{REFERENCES}

\noindent Patrice P. Ntumba\\{Department of Mathematics and Applied
Mathematics}\\{University of Pretoria}\\ {Hatfield 0002, Republic of
South Africa}\\{Email: patrice.ntumba@up.ac.za}

\noindent Adaeze C. Orioha\\{Department of Mathematics and Applied
Mathematics}\\{University of Pretoria}\\ {Hatfield 0002, Republic of
South Africa}\\{Email: adaezeanyaegbunam@rocketmail.com}

\end{document}